\theoremstyle{plain}
\newtheorem{theorem}{Theorem}
\newtheorem{lemma}{Lemma}
\newtheorem{proposition}{Proposition}
\newtheorem{corollary}{Corollary}
\theoremstyle{definition}
\newtheorem{definition}{Definition}
\newtheorem{example}{Example}
\theoremstyle{remark}
\newtheorem{remark}{Remark}
\newcommand{\spec}{\operatorname{D}}
\newcommand{\wi}{\cong^w} 
\newcommand{\fms}{\mathbf{FMS}} 
\newcommand{\nat}{\Longrightarrow}
\renewcommand{\implies}{\Rightarrow}
\renewcommand{\impliedby}{\Leftarrow}
\newcommand{\sif}{\mathscr{I}}
\newcommand{\vr}{\mathrm{VR}} 
\title{On the notion of weak isometry for finite metric spaces}
\author[1]{Alessandro De Gregorio\thanks{alessandro.degregorio@polito.it}}
\author[1]{Ulderico Fugacci\thanks{ulderico.fugacci@polito.it}}
\author[2]{Facundo Memoli\thanks{memoli@math.osu.edu}}
\author[1]{Francesco Vaccarino\thanks{francesco.vaccarino@polito.it}}
\affil[1]{Department of Mathematical Sciences, Politecnico di Torino}
\affil[2]{Department of Mathematics and Department of Computer Science and Engineering, The Ohio State University}
\date{May 2020}
\begin{document}

\maketitle

\begin{abstract}
Finite metric spaces are the object of study in many data analysis problems.
We examine the concept of \emph{weak isometry} between finite metric spaces, in order to analyse properties of the spaces that are invariant under strictly increasing rescaling of the distance functions. In this paper, we analyse some of the possible complete and incomplete invariants for weak isometry and we introduce a dissimilarity measure that asses how far two spaces are from being weakly isometric. Furthermore, we compare these ideas with the theory of persistent homology, to study how the two are related.
\end{abstract}

\section{Introduction}
\label{sec:intro}
Finite metric spaces arise in many applicative problems, whenever we have a set of objects on which it is defined a measure of dissimilarity. One of the main purposes of Topological Data Analysis (TDA) is to gather topological and geometric information from these datasets \cite{carlsson09}, in order to be able to perform comparisons between the phenomena that generated the measurements. The main input, in the TDA pipeline, is a nested sequence of simplicial complexes called \emph{filtration}. Such a sequence is usually obtained from a weighted undirected network, or, in case the weights of the network satisfy the triangular inequality, from a finite metric space. In many problems, it is interesting to obtain information which is invariant under ``rescalings'' of the metric space. If, for example, the distances are physical quantities, we may want that the results that we get are independent from the system used for measurements. Sometimes, our observations undergo transformations that are not linear, but that preserve the order of distance between points. For example, in \cite{giusti15}, Giusti er al. showed that such an approach is useful to study data from neural activity and connectivity.
In the recent theory of monoid equivariant operators \cite{chacholski20}, an interesting family of operators is that of the so called \emph{change of units}, that are nothing but functions that transform a dataset via a \emph{rescaling} of the observations.
In this work, we want to give a definition of weak isometry that let us identify two finite metric spaces if one of the distance functions can be obtained from the other by mean of a composition with a strictly increasing real valued function. We compare our approach to the work of Ganyushkin and Tsvirkunov \cite{ganyushkin94}, where they perform a classification of finite metric spaces and introduce a notion of isomorphism between them. 
We will show that the problem of determining if two metric spaces are weakly isometric or not can be reduced to the classical problem of isometry. Then, we will see how curvature sets introduced by Gromov \cite{gromov07} and Viertoris-Rips filtration can serve as complete and incomplete invariants for weak isometry.

\section{Weakly isometric finite metric spaces}
\label{sec:weak-isometry}
Henceforth we will consider all the metric spaces taken into account to be finite. We will write FMS for finite metric space. Whenever we write $\mathbb{R}^+$ we are considering the set $\{x\in \mathbb{R}\ |\ x\geq 0\}$.

\begin{definition}[weak isometry]
\label{def:weak-isometry}
Let us consider two metric spaces $(X,d_X)$ and $(Y,d_Y)$. We say that they are weakly isometric if there exist a bijection $\varphi:X \longrightarrow Y$ and a strictly increasing function $\psi: \mathbb{R}^+\longrightarrow\mathbb{R}^+$ such that, for all $x_1, x_2 \in X$,
\begin{equation}
    \label{eq:def:weak-isometry}
    \psi(d_X(x_1,x_2)) = d_Y(\varphi(x_1),\varphi(x_2)).
\end{equation}

If $(X,d_X)$ and $(Y,d_Y)$ are weakly isometric, we will write $$(X,d_X) \wi (Y,d_Y).$$
\end{definition}
This concept is also defined in \cite{dovgoshey13}, called by the authors \emph{weak similarity}, where it is applied to semi-metric spaces of possibly infinite cardinality.

\begin{theorem}
\label{thm:wy-equivalence-relation}
The relation of weak isometry is an equivalence relation.
\end{theorem}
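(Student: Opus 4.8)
The plan is to verify the three defining properties of an equivalence relation—reflexivity, symmetry, and transitivity—directly from Definition~\ref{def:weak-isometry}. The key observation throughout is that the class of strictly increasing functions $\psi:\mathbb{R}^+\longrightarrow\mathbb{R}^+$ is closed under the operations we need: it contains the identity, it is closed under inverse (on the appropriate domain), and it is closed under composition.

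For reflexivity, I would exhibit for any $(X,d_X)$ the witnesses $\varphi=\mathrm{id}_X$ and $\psi=\mathrm{id}_{\mathbb{R}^+}$; since the identity is a strictly increasing bijection, Equation~\eqref{eq:def:weak-isometry} holds trivially, so $(X,d_X)\wi(X,d_X)$. For symmetry, suppose $(X,d_X)\wi(Y,d_Y)$ via $\varphi$ and $\psi$. I would then propose $\varphi^{-1}:Y\longrightarrow X$ and $\psi^{-1}$ as the witnesses for the reverse direction: applying $\psi^{-1}$ to both sides of~\eqref{eq:def:weak-isometry} and setting $y_i=\varphi(x_i)$ gives $\psi^{-1}(d_Y(y_1,y_2))=d_X(\varphi^{-1}(y_1),\varphi^{-1}(y_2))$. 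For transitivity, given $(X,d_X)\wi(Y,d_Y)$ via $(\varphi,\psi)$ and $(Y,d_Y)\wi(Z,d_Z)$ via $(\eta,\chi)$, I would take $\eta\circ\varphi$ and $\chi\circ\psi$ and chain the two defining equations.

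The one point requiring genuine care—rather than being purely formal—is the symmetry step, specifically justifying that $\psi^{-1}$ is a well-defined strictly increasing function $\mathbb{R}^+\longrightarrow\mathbb{R}^+$. A strictly increasing function is injective and hence invertible onto its image, but $\psi$ need not be surjective onto $\mathbb{R}^+$, so $\psi^{-1}$ is a priori only defined on $\psi(\mathbb{R}^+)$. I expect this to be the main (and only) obstacle. I would resolve it by noting that Equation~\eqref{eq:def:weak-isometry} only ever evaluates $\psi^{-1}$ at the finitely many values $d_Y(\varphi(x_1),\varphi(x_2))=\psi(d_X(x_1,x_2))$, all of which lie in the image of $\psi$; so it suffices to have $\psi^{-1}$ defined on the relevant finite subset of $\psi(\mathbb{R}^+)$, where strict monotonicity of $\psi$ guarantees it exists and is itself strictly increasing. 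Alternatively, one can extend $\psi^{-1}$ to a strictly increasing function on all of $\mathbb{R}^+$ (for instance by interpolating linearly on the gaps of the image and extending affinely past the supremum), but this extension is unnecessary for the finite-space setting. Once this subtlety is dispatched, symmetry and transitivity both reduce to the closure of strictly increasing functions under inversion and composition, and the proof concludes.
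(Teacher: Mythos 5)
Your proposal is correct and takes essentially the same route as the paper's proof: the identity for reflexivity, $\varphi^{-1}$ and $\psi^{-1}$ for symmetry, and compositions for transitivity. In fact you are more careful than the paper on the one delicate point—the paper simply asserts that $\psi$ is invertible because it is strictly monotone, whereas you correctly note that $\psi^{-1}$ is a priori defined only on $\psi(\mathbb{R}^+)$ and must either be restricted to the finitely many relevant distance values or extended (e.g.\ by piecewise-linear interpolation) to a strictly increasing function on all of $\mathbb{R}^+$ so as to satisfy Definition~\ref{def:weak-isometry} literally.
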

\begin{proof}
We check the three properties of equivalence relations.
\begin{itemize}
    \item Reflexivity: using the identity, $X \wi X$ for all finite metric spaces $X$.
    \item Symmetry: if $X \wi Y$ we have a bijection $\varphi:X \longrightarrow Y$ and a strictly increasing function $\psi: \mathbb{R}^+\longrightarrow\mathbb{R}^+$ with $\psi(d_X(x_1,x_2)) = d_Y(\varphi(x_1),\varphi(x_2))$ for all $x_1,x_2 \in X$. We have that $\psi$ is an invertible function since it is strictly monotone. For each $y_1,y_2\in Y$ consider $x_1,x_2\in X$ with $y_i = \varphi(x_i)$, where $i=1,2$. Then,
    \begin{equation*}
        d_X(\varphi^{-1}(y_1),\varphi^{-1}(y_2)) = \psi^{-1}(d_Y(y_1,y_2)).
    \end{equation*}
    and so, by definition, $Y \wi X$.
    \item Transitivity: if $X \wi Y$ and $Y \wi Z$ consider the functions $\varphi_1, \varphi_2, \psi_1, \psi_2$ such that
    \begin{align*}
        \psi_1(d_X(x_1,x_2)) &= d_Y(\varphi_1(x_1),\varphi_1(x_2)) \quad \forall x_1,x_2\in X,\\
        \psi_2(d_Y(y_1,y_2)) &= d_Z(\varphi_2(y_1),\varphi_2(y_2)) \quad \forall y_1,y_2\in Y.
    \end{align*}
    Then,
    \begin{equation*}
        \psi_1(d_X(x_1,x_2)) = d_Y(\varphi_1(x_1),\varphi_1(x_2)) = \psi_2^{-1}(d_Z(\varphi_2(\varphi_1(x_1)),\varphi_2(\varphi_1(x_2))))
    \end{equation*}
    hence, considering the functions $\varphi_2 \circ \varphi_1$ and $\psi_2 \circ \psi_1$, we have $X \wi Z$.
\end{itemize}
\end{proof}

Ganyushkin and Tsvirkunov \cite{ganyushkin94}  introduced a notion of isomorphism for finite metric spaces which we will compare to weak isometry below. 

\begin{definition}[isomorphism - \cite{ganyushkin94}]
\label{def:isomorphism}
We say that two metric spaces $(X,d_X)$ and $(Y,d_Y)$ are isomorphic if there is a bijection $\varphi:X\longrightarrow Y$ such that for all $x_1, x_2, x'_1, x'_2 \in X$ we have

\begin{align}
    \label{eq:def:isomorphism-1}
    d_X(x_1,x_2) = d_X(x'_1,x'_2) \implies d_Y(\varphi(x_1),\varphi(x_2)) = d_Y(\varphi(x'_1),\varphi(x'_2)) \\
    \label{eq:def:isomorphism-2}
    d_X(x_1,x_2) < d_X(x'_1,x'_2) \implies d_Y(\varphi(x_1),\varphi(x_2)) < d_Y(\varphi(x'_1),\varphi(x'_2)).
\end{align}
If $(X,d_X)$ and $(Y,d_Y)$ are isomorphic we will write $(X,d_X) \simeq (Y,d_Y)$.
\end{definition}
Now, we see how the two concepts are related.
\begin{theorem}
Two finite metric spaces are weakly isometric if and only if they are isomorphic.
\end{theorem}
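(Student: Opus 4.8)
The plan is to prove the two implications separately; the forward implication $\wi \implies \simeq$ is essentially immediate, while the reverse implication $\simeq \implies \wi$ carries all the content and is where finiteness enters. For $\wi \implies \simeq$, suppose $(X,d_X)\wi(Y,d_Y)$ is witnessed by a bijection $\varphi$ and a strictly increasing $\psi$. I would simply check that the same $\varphi$ witnesses isomorphism: if $d_X(x_1,x_2)=d_X(x_1',x_2')$, then since $\psi$ is a function, applying it to both sides gives $d_Y(\varphi(x_1),\varphi(x_2))=d_Y(\varphi(x_1'),\varphi(x_2'))$, which is \eqref{eq:def:isomorphism-1}; and if $d_X(x_1,x_2)<d_X(x_1',x_2')$, then strict monotonicity of $\psi$ yields the strict inequality \eqref{eq:def:isomorphism-2}.

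For the reverse implication, let $\varphi$ be the bijection witnessing the isomorphism, and set $D_X := \{\,d_X(x_1,x_2) : x_1,x_2\in X\,\}\subseteq\mathbb{R}^+$, which is a finite set because $X$ is finite. The idea is to first build $\psi$ on $D_X$ and then extend it. On $D_X$ I would define, for each value $v\in D_X$, the quantity $\psi(v):=d_Y(\varphi(x_1),\varphi(x_2))$ for any chosen pair with $d_X(x_1,x_2)=v$. Condition \eqref{eq:def:isomorphism-1} is exactly what guarantees this is well-defined (independent of the chosen representative pair), and condition \eqref{eq:def:isomorphism-2} is exactly what guarantees that the resulting map $\psi$ is strictly increasing on $D_X$. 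Note also that $\psi(0)=0$, since $d_X(x,x)=0$ forces $d_Y(\varphi(x),\varphi(x))=0$, so the construction is consistent with the required codomain $\mathbb{R}^+$.

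It then remains to extend $\psi$ from the finite set $D_X$ to a strictly increasing function $\mathbb{R}^+\to\mathbb{R}^+$ defined everywhere. Writing the ordered elements as $0=v_0<v_1<\dots<v_k$ with images $0=w_0<w_1<\dots<w_k$, I would interpolate linearly on each interval $[v_i,v_{i+1}]$ and extend beyond $v_k$ by a ray of positive slope. Each piece has strictly positive slope and all values stay nonnegative, so the extension is a genuine strictly increasing $\psi:\mathbb{R}^+\to\mathbb{R}^+$. By construction it satisfies $\psi(d_X(x_1,x_2))=d_Y(\varphi(x_1),\varphi(x_2))$ for every pair, so $(X,d_X)\wi(Y,d_Y)$.

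The only genuinely non-formal step is this last extension, and it is precisely where finiteness of $X$ is indispensable: a strictly increasing partial map on a finite subset of $\mathbb{R}^+$ always extends to a strictly increasing map on all of $\mathbb{R}^+$, whereas for a dense or unbounded distance set no such elementary interpolation would be available. The well-definedness and monotonicity on $D_X$, by contrast, fall out directly from the two defining conditions of isomorphism, so I expect the bookkeeping there to be routine.
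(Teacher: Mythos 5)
Your proof is correct and follows essentially the same route as the paper: the forward direction applies $\psi$ and its strict monotonicity directly, and the reverse direction builds $\psi$ on the set of distance values via $\varphi$, with condition \eqref{eq:def:isomorphism-1} giving well-definedness and condition \eqref{eq:def:isomorphism-2} giving strict monotonicity. If anything, your explicit linear-interpolation extension to a strictly increasing function on all of $\mathbb{R}^+$ is more careful than the paper's proof, which merely asserts that such an increasing $\psi:\mathbb{R}^+\longrightarrow\mathbb{R}^+$ can be defined after ordering the pairs.
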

\begin{proof}
Assume that $X \wi Y$. Then, we have two functions $\varphi$ and $\psi$ such that, for all $x_1,x_2,x'_1, x'_2 \in X$,

\begin{align}
    \psi(d_X(x_1,x_2)) &= d_Y(\varphi(x_1),\varphi(x_2)), \\
    \psi(d_X(x'_1,x'_2)) &= d_Y(\varphi(x'_1),\varphi(x'_2)).
\end{align}
Then, if $d_X(x_1,x_2) = d_X(x'_1,x'_2)$, we have
\begin{equation}
    d_Y(\varphi(x_1),\varphi(x_2))  =  \psi(d_X(x_1,x_2)) = \psi(d_X(x'_1,x'_2)) = d_Y(\varphi(x'_1),\varphi(x'_2)).
\end{equation}
If $d_X(x_1,x_2) < d_X(x'_1,x'_2)$, since $\psi$ is a strictly increasing function,
\begin{equation}
    d_Y(\varphi(x_1),\varphi(x_2))  =  \psi(d_X(x_1,x_2)) < \psi(d_X(x'_1,x'_2)) = d_Y(\varphi(x'_1),\varphi(x'_2)).
\end{equation}
Therefore, $X \wi Y \implies X \simeq Y$.\\ 
On the other hand, assume $X \simeq Y$ and consider the bijection $\varphi$ given by  \cref{def:isomorphism}. It is possible to order all the pairs $(x_i,x_j) \in X\times X$ so that 
\begin{equation}
    d_X(x_{i_1}, x_{j_1}) \leq  d_X(x_{i_2}, x_{j_2})\leq \dots \leq  d_X(x_{i_{n^2}}, x_{j_{n^2}}),
\end{equation}
where $n$ is the number of points of $X$ and $Y$. Because of implications \eqref{eq:def:isomorphism-1} and \eqref{eq:def:isomorphism-2}, we have that
\begin{equation}
    d_Y(\varphi(x_{i_1}), \varphi(x_{j_1})) \leq  d_Y(\varphi(x_{i_2}), \varphi(x_{j_2}))\leq \dots \leq  d_Y(\varphi(x_{i_k}), \varphi(x_{j_k})),
\end{equation}
hence, we can define an increasing function $\psi:\mathbb{R}^+ \longrightarrow \mathbb{R}^+$ with
\begin{equation}
    \psi( d_X(x_{i_r},x_{j_r})) = d_Y(\varphi(x_{i_r}),\varphi(x_{j_r}))\quad \forall (x_{i_r},x_{j_r}) \in X\times X
\end{equation}
and then $X \simeq Y \implies X \wi Y$.
\end{proof}

We have seen that the two concepts are the same, but weak isometry explicitly shows the rescaling that has to be performed to obtain one space from the other. Now, we want to associate to each equivalence class of weak isometry a good representative.

\begin{definition}[distance set]
\label{def:spectrum}
Given a metric space $(X,d)$, we define its distance set $\spec(X,d)$ as the set of all pairwise distances between different points of $X$.
\begin{equation}
    \label{eq:def:spectrum}
    \spec(X,d) := \left\{ d(x_1,x_2)\ \middle|\ x_1, x_2 \in X,\ x_1 \neq x_2  \right\}.
\end{equation}

When there is no ambiguity for the metric $d$ defined on the space $X$, we will simply write $\spec(X)$.
\end{definition}

\begin{lemma}
\label{lem:spectrum-of-isomorphic-fms}
Two weakly isometric finite metric spaces have distance sets of the same cardinality.
\end{lemma}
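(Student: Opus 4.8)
The plan is to exhibit an explicit bijection between the two distance sets, using the rescaling function $\psi$ supplied by the weak isometry. Suppose $(X,d_X) \wi (Y,d_Y)$, so that we have a bijection $\varphi:X\longrightarrow Y$ and a strictly increasing $\psi:\mathbb{R}^+\longrightarrow\mathbb{R}^+$ satisfying \eqref{eq:def:weak-isometry}. I would consider the restriction of $\psi$ to $\spec(X)$ and argue that it is a well-defined bijection onto $\spec(Y)$; the equality of cardinalities then follows immediately.

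First I would check that $\psi$ carries $\spec(X)$ into $\spec(Y)$. Take $r \in \spec(X)$, so $r = d_X(x_1,x_2)$ for some $x_1 \neq x_2$. Then $\psi(r) = d_Y(\varphi(x_1),\varphi(x_2))$ by \eqref{eq:def:weak-isometry}, and since $\varphi$ is a bijection we have $\varphi(x_1) \neq \varphi(x_2)$, so $\psi(r) \in \spec(Y)$. This gives a map $\psi|_{\spec(X)} : \spec(X) \longrightarrow \spec(Y)$. Injectivity is immediate because a strictly increasing function is injective.

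For surjectivity I would run the same argument in reverse: given $s \in \spec(Y)$, write $s = d_Y(y_1,y_2)$ with $y_1 \neq y_2$, and use that $\varphi$ is a bijection to obtain the unique preimages $x_i = \varphi^{-1}(y_i)$ with $x_1 \neq x_2$. Then $d_X(x_1,x_2) \in \spec(X)$ and $\psi(d_X(x_1,x_2)) = s$, so $s$ lies in the image. Hence $\psi|_{\spec(X)}$ is a bijection and $|\spec(X)| = |\spec(Y)|$.

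There is no serious obstacle here; the only point requiring care is the repeated use of the bijectivity of $\varphi$ to guarantee that \emph{distinct} pairs of points are sent to distinct pairs of points, so that we remain within the distance sets of strictly positive pairwise distances rather than accidentally including a zero distance. One could alternatively avoid proving surjectivity directly by noting that $\psi|_{\spec(X)}$ is an injection giving $|\spec(X)| \leq |\spec(Y)|$, and then invoking the symmetry of $\wi$ established in \cref{thm:wy-equivalence-relation} to obtain the reverse inequality; but the direct bijection seems cleaner.
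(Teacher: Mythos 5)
Your proof is correct and follows exactly the paper's approach: the paper also restricts $\psi$ to $\spec(X)$, asserts it is injective (being strictly increasing) and surjective onto $\spec(Y)$, and concludes equality of cardinalities. You simply spell out the details (membership of $\psi(r)$ in $\spec(Y)$ via bijectivity of $\varphi$, and the surjectivity argument via $\varphi^{-1}$) that the paper leaves implicit.
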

\begin{proof}
If $X \wi Y$, there is a strictly increasing function $\psi$ such that 
\begin{equation}
    \psi(\spec(X)) := \left\{ \psi(l)\ \middle|\ l \in \spec(X) \right\} = \spec(Y).
\end{equation}
So, since $\psi\vert_{\spec(X)}$ is injective by definition and surjective on $\spec(Y)$, then $\left|\spec(X)\right| = \left|\spec(Y)\right|$.
\end{proof}
\begin{remark}
The reciprocal statement does not hold. Two spaces can have the same distance set but not be weakly isometric. For example, Boutin and Kemper in \cite{boutin04} study the problem of recontruction of a metric space given the distribution of distances between points.
\end{remark}

\begin{example}
The two metric spaces in  \cref{fig:non-wi-fms}, $X = \{a,b,c\}$ with $d_X(a,c) = d_X(b,c)= 6$, $d_X(a,b)=5$ and $Y = \{d,e,f\}$ with $d_Y(d,f) = d_X(e,f)= 5$, $d_Y(d,e)=6$ have the same distance set, but they are not weakly isometric.
\tikzset{every picture/.style={line width=0.75pt}} 
\begin{figure}[ht!]
\centering
    \begin{tikzpicture}[x=0.75pt,y=0.75pt,yscale=-1,xscale=0.75]
    \draw   (168.5,29.99) -- (247.84,242.96) -- (86.53,242.96) -- cycle ;
    \draw   (474.25,151) -- (605.5,240) -- (343,240) -- cycle ;
    \draw (113,134) node  [align=left] {6};
    \draw (165,261) node  [align=left] {5};
    \draw (231,136) node  [align=left] {6};
    \draw (75,252) node  [align=left] {a};
    \draw (264,250) node  [align=left] {b};
    \draw (169,17) node  [align=left] {c};
    \draw (338,252) node  [align=left] {d};
    \draw (620,251) node  [align=left] {e};
    \draw (477,138) node  [align=left] {f};
    \draw (474,252) node  [align=left] {6};
    \draw (407,180) node  [align=left] {5};
    \draw (543,183) node  [align=left] {5};
    \end{tikzpicture}
\caption{FMS that are not weakly isometric.}
\label{fig:non-wi-fms}
\end{figure}
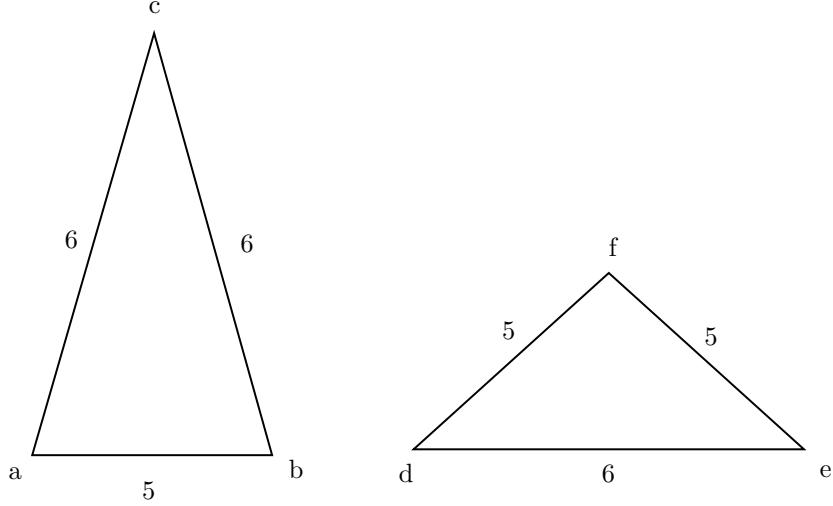
\end{example}

\begin{definition}[natural-valued metric space]
\label{def:natural-valued-ms}
We say that the metric $d$ defined on the space $X$ is natural-valued if $\spec(X) \subset \mathbb{N}$.
\end{definition}

\begin{definition}[dense distance set]
\label{def:dense-spectrum}
Given a finite metric space $(X,d)$ with natural-valued matric $d$ we say that its distance set is dense if it is a list of consecutive natural numbers, 
\begin{equation}
\label{eq:def:dense1}
    \spec(X) = \left\{a+1, a+2, \dots, a+|\spec(X)| \right\}.
\end{equation}
\end{definition}

\begin{definition}[canonical]
\label{def:canonical}
A finite metric space of cardinality $n$ is called $k$-canonical if it has a natural valued and dense distance set of the form 
\begin{equation}
    D_{n,k}:= \left\{ 2\binom{n}{2}, 2\binom{n}{2}-1,\dots, 2\binom{n}{2}-k+1\right\}.
\end{equation}
\end{definition}
Canonical metric spaces are interesting because for them the notion of weak isometry is equivalent to that of isometry, as we show in the next Lemma.
\begin{lemma}
\label{lem:isometry-of-canonicals}
Let $\mathcal{C}_1$ and $\mathcal{C}_2$ be canonical finite metric spaces. Then, $\mathcal{C}_1$ is weakly isometric to $\mathcal{C}_2$ if and only if $\mathcal{C}_1$ is isometric to $\mathcal{C}_2$.
\end{lemma}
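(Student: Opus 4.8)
The plan is to prove the two implications separately, one of which is immediate. First I would note that isometry is the special case of weak isometry in which the rescaling is trivial: if $\mathcal{C}_1$ is isometric to $\mathcal{C}_2$ via a distance-preserving bijection $\varphi$, then $\varphi$ together with $\psi = \mathrm{id}_{\mathbb{R}^+}$ satisfies \cref{def:weak-isometry}, so $\mathcal{C}_1 \wi \mathcal{C}_2$. This disposes of the direction isometric $\implies$ weakly isometric.

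For the converse, suppose $\mathcal{C}_1 \wi \mathcal{C}_2$ with witnessing bijection $\varphi$ and strictly increasing $\psi \colon \mathbb{R}^+ \to \mathbb{R}^+$. The first substantive step is to show that the two canonical spaces carry literally the same distance set. Since $\varphi$ is a bijection, $\mathcal{C}_1$ and $\mathcal{C}_2$ share a common cardinality $n$; by \cref{lem:spectrum-of-isomorphic-fms} their distance sets have a common cardinality $k$. By \cref{def:canonical}, the distance set of a $k$-canonical space of cardinality $n$ is forced to be exactly $D_{n,k}$, which depends only on $n$ and $k$. Hence $\spec(\mathcal{C}_1) = \spec(\mathcal{C}_2) = D_{n,k}$.

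The crucial observation is that $\psi$ then restricts to a strictly increasing self-bijection of the finite totally ordered set $D_{n,k}$. Arguing as in the proof of \cref{lem:spectrum-of-isomorphic-fms}, $\psi$ maps $\spec(\mathcal{C}_1)$ onto $\spec(\mathcal{C}_2)$, and here both equal $D_{n,k}$; being globally strictly increasing, its restriction to $D_{n,k}$ is an order-preserving bijection of a finite chain onto itself. Such a map must fix every element---the least is sent to the least, the next to the next, and so on by induction---so $\psi\vert_{D_{n,k}} = \mathrm{id}$. Consequently, for all $x_1, x_2 \in \mathcal{C}_1$,
\[
  d_{\mathcal{C}_2}(\varphi(x_1),\varphi(x_2)) = \psi\bigl(d_{\mathcal{C}_1}(x_1,x_2)\bigr) = d_{\mathcal{C}_1}(x_1,x_2),
\]
which says precisely that $\varphi$ is an isometry, giving $\mathcal{C}_1$ isometric to $\mathcal{C}_2$.

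I expect no genuine obstacle in carrying this out; essentially everything is bookkeeping once the distance sets are seen to coincide. The one point deserving care is the rigidity statement that an order-preserving bijection of a finite chain is the identity, since this is exactly what forces $\psi$ to be trivial and is the reason the canonical form serves as a faithful representative of its weak-isometry class.
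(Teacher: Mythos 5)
Your proposal is correct and follows essentially the same route as the paper: one direction via the trivial rescaling $\psi = \mathrm{id}$, and the converse by showing $\spec(\mathcal{C}_1) = \spec(\mathcal{C}_2) = D_{n,k}$ (using \cref{lem:spectrum-of-isomorphic-fms} and the fact that a canonical distance set is determined by its cardinality) and then forcing $\psi$ to be the identity on the common distance set. In fact, you spell out more carefully than the paper the rigidity step that a strictly increasing self-bijection of a finite chain must fix every element, which the paper merely asserts with ``such a function can only be the identity.''
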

\begin{proof}
If $\mathcal{C}_1$ is isometric to $\mathcal{C}_2$, then $\mathcal{C}_1$ is weakly isometric to $\mathcal{C}_2$. On the other hand assume $\mathcal{C}_1 \wi \mathcal{C}_2$. By  \cref{lem:spectrum-of-isomorphic-fms}, we know that the two spaces have distance sets of the same cardinality. On the other hand, the distance set of a canonical space is defined by its cardinality, so 
\begin{equation}
\begin{split}
\spec(\mathcal{C}_1) &= \left\{ 2\binom{n}{2}, 2\binom{n}{2} - 1,\dots, 2\binom{n}{2} -|\spec(\mathcal{C}_1)|+1 \right\} = \\
 &=\left\{ 2\binom{n}{2}, 2\binom{n}{2} - 1,\dots, 2\binom{n}{2} -|\spec(\mathcal{C}_2)|+1 \right\} = \spec(\mathcal{C}_2).
\end{split}
\end{equation}
By hypothesis, we have a bijection $\varphi:\mathcal{C}_1 \longrightarrow \mathcal{C}_2$ and a strictly increasing function $\psi:\mathbb{R}^+ \longrightarrow \mathbb{R}^+$ such that $\psi(\spec(\mathcal{C}_1)) = \spec(\mathcal{C}_2)$. Since the two distance sets are equal, such a function can only be the identity and, therefore, 
\begin{equation}
    \psi(d_{\mathcal{C}_1}(x_1,x_2))= d_{\mathcal{C}_1}(x_1,x_2) =  d_{\mathcal{C}_2}(\varphi(x_1),\varphi(x_2))
\end{equation}
and the two spaces are isometric.
\end{proof}
In Proposition 2 of \cite{ganyushkin94}, the authors proved the following useful result.
\begin{theorem}
\label{thm:ganyushkin-canonical}
Each finite metric space is isomorphic to a canonical metric space.
\end{theorem}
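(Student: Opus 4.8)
The plan is to reduce the statement to weak isometry, since by the theorem already proved weak isometry and isomorphism coincide, and then to manufacture a canonical representative by simply relabelling the distances of the given space as consecutive integers. Concretely, let $(X,d_X)$ be a finite metric space with $|X|=n$, put $k:=|\spec(X)|$, and note that $k\le\binom{n}{2}$ because there are only $\binom{n}{2}$ unordered pairs of distinct points. Writing the distinct distances in increasing order as $\ell_1<\ell_2<\dots<\ell_k$, I would define a strictly increasing map on $\spec(X)$ by $\psi(\ell_i):=2\binom{n}{2}-k+i$, whose image is exactly $D_{n,k}$. Transporting $d_X$ along $\psi$ on the same underlying set, I set $d'(x,x):=0$ and $d'(x,y):=\psi(d_X(x,y))$ for $x\neq y$.

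The heart of the argument, and the step I expect to be the only genuine obstacle, is verifying that $(X,d')$ is still a metric space. Symmetry is inherited from $d_X$, and positivity is clear since every nonzero value satisfies $d'(x,y)\ge 2\binom{n}{2}-k+1>0$; so the real content is the triangle inequality. This is precisely where the offset $2\binom{n}{2}$ earns its keep: every nonzero value of $d'$ lies in the band $\bigl[\,2\binom{n}{2}-k+1,\ 2\binom{n}{2}\,\bigr]$, so for any three points the left-hand side of the triangle inequality is at most $2\binom{n}{2}$ while the right-hand side is at least $2\bigl(2\binom{n}{2}-k+1\bigr)$. The required inequality $2\binom{n}{2}\le 2\bigl(2\binom{n}{2}-k+1\bigr)$ simplifies to $k\le\binom{n}{2}+1$, which holds since $k\le\binom{n}{2}$. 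Thus clustering all distances near $2\binom{n}{2}$ forces even the two smallest values to sum past the largest, and the triangle inequality becomes automatic; this is the sole place where the cardinality bound and the choice of offset are used.

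It then remains only to assemble the pieces. By construction $\psi$ is a bijection from $\spec(X)$ onto $D_{n,k}$ and each $\psi(\ell_i)$ is realised by some pair, so $(X,d')$ has distance set exactly $D_{n,k}$ and is therefore $k$-canonical. Extending $\psi$ to a strictly increasing function $\mathbb{R}^+\longrightarrow\mathbb{R}^+$ (always possible on a finite set, e.g.\ piecewise-linearly) and taking $\varphi$ to be the identity on $X$, the identity $\psi(d_X(x,y))=d'(x,y)$ witnesses $(X,d_X)\wi(X,d')$. Invoking the equivalence of weak isometry and isomorphism established earlier, $(X,d_X)$ is isomorphic to the canonical space $(X,d')$, which proves the claim.
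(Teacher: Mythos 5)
Your proposal is correct, and it is in substance the same construction the paper relies on: the paper itself does not prove this theorem (it defers to Proposition~2 of the cited work of Ganyushkin and Tsvirkunov), but the map you build — relabelling the $k$ distinct distances as $2\binom{n}{2}-k+1,\dots,2\binom{n}{2}$ on the same point set — is exactly the paper's canonicalization (\cref{def:canonicalization}). The one piece of genuine mathematical content, namely that the offset $2\binom{n}{2}$ forces the triangle inequality because twice the smallest relabelled distance already exceeds the largest, is precisely the verification the paper leaves to the reference, and your argument for it (including the cardinality bound $k\leq\binom{n}{2}$ and the extension of $\psi$ to a strictly increasing function on $\mathbb{R}^+$) is sound.
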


\begin{remark}
We will say that a finite metric space $\mathcal{C}$ is canonical for the FMS $X$ if they are weakly isometric and $\mathcal{C}$ is canonical.
\end{remark}
We can now define a map called \emph{canonicalization} which assigns to each finite metric space $X$ a canonical metric space $\mathcal{C}_X$.
\begin{definition}[canonicalization]
\label{def:canonicalization}
Let $X$ be a finite metric space of cardinality $n$ and distance set $\spec(X) = \{a_1, a_2, \dots, a_k\ | a_i<a_i \textrm{ if }i<j\}$. Define $\psi: \mathbb{R}^+ \longrightarrow \mathbb{N}$ as
\begin{equation}
    \psi(a_i) = 2\binom{n}{2} - k + i.
\end{equation}
The canonicalization of $X$ is the space $(\mathcal{C}_X, d_{\mathcal{C}_X})$ with:
\begin{itemize}
    \item $\mathcal{C}_X = X$ 
    \item $
        d_{\mathcal{C}_X}(x_1,x_2) = \begin{cases}
        \psi(d_X(x_1,x_2)) &\textrm{ if } x_1\neq x_2 \\
        0 &\textrm{ if } x_1= x_2. 
        \end{cases}
    $
\end{itemize}
\end{definition}
Canonical metric spaces are important because, as we will see with the following corollary, they allow us to associate to each weak isometry equivalence class a unique representative. 
\begin{corollary}[uniqueness of canonical representations]
\label{cor:unicity-of-canonical}
Let $X$ be a finite metric space with $\mathcal{C}$ and $\mathcal{C}'$ canonical for $X$. Then, $\mathcal{C}$ and $\mathcal{C}'$ are isometric, that is, there is a unique metric space canonical for $X$ up to isometry.
\end{corollary}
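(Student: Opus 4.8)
The plan is to reduce the statement to the two earlier results about weak isometry, namely that it is an equivalence relation (\cref{thm:wy-equivalence-relation}) and that on canonical spaces it coincides with isometry (\cref{lem:isometry-of-canonicals}). Since $\mathcal{C}$ and $\mathcal{C}'$ are each canonical \emph{for} $X$, by the convention fixed in the preceding remark this means two things simultaneously: each of $\mathcal{C}$, $\mathcal{C}'$ is a canonical metric space, and each is weakly isometric to $X$, i.e. $\mathcal{C} \wi X$ and $\mathcal{C}' \wi X$.

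First I would invoke that weak isometry is an equivalence relation. By symmetry we get $X \wi \mathcal{C}'$, and then by transitivity, chaining $\mathcal{C} \wi X$ with $X \wi \mathcal{C}'$, we conclude $\mathcal{C} \wi \mathcal{C}'$. This is the only place where the equivalence-relation structure is used, and it is precisely what lets us transfer the weak-isometry relationship through the common space $X$.

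Next I would apply \cref{lem:isometry-of-canonicals} to the pair $\mathcal{C}$, $\mathcal{C}'$. Both are canonical by hypothesis, and we have just shown $\mathcal{C} \wi \mathcal{C}'$; the lemma then yields directly that $\mathcal{C}$ and $\mathcal{C}'$ are isometric. Since $\mathcal{C}$ and $\mathcal{C}'$ were arbitrary canonical representatives of the weak-isometry class of $X$, this establishes that the canonical representative is unique up to isometry, as claimed.

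I do not expect a genuine obstacle here, since the real work has already been done in \cref{lem:isometry-of-canonicals}; the only point requiring care is the bookkeeping of the definition of ``canonical for $X$'', making sure that it packages together both canonicality and weak isometry to $X$, so that after transitivity both hypotheses of \cref{lem:isometry-of-canonicals} are in fact available. One could optionally remark that the cardinalities of $\mathcal{C}$ and $\mathcal{C}'$ agree (both equal $|X|$, as weak isometry is realized by a bijection), which is implicit in the lemma's conclusion but worth noting for clarity.
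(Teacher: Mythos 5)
Your proof is correct and follows exactly the paper's own argument: use the symmetry and transitivity of weak isometry (\cref{thm:wy-equivalence-relation}) to obtain $\mathcal{C} \wi \mathcal{C}'$, then apply \cref{lem:isometry-of-canonicals} to conclude isometry. The only difference is that you spell out the bookkeeping (symmetry, the meaning of ``canonical for $X$'') that the paper leaves implicit.
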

\begin{proof}
By the transitivity of weak isometry, we have that $\mathcal{C} \simeq X \simeq \mathcal{C}'$ and, by  \cref{lem:isometry-of-canonicals}, they are isometric.
\end{proof}

For a given FMS $X$,  \cref{cor:unicity-of-canonical} allows us to identify a canonical representative of a  $X$ in the form of its canonicalization, as introduced in  \cref{def:canonicalization}.

In this way, we can now reformulate the problem of weak isometry to that of classical isometry between canonical spaces.
\begin{theorem}
\label{thm:wi-to-isometry}
Two finite metric spaces are weakly isometric if and only if their canonicalizations are isometric.

\end{theorem}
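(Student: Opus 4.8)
The plan is to reduce everything to \cref{lem:isometry-of-canonicals} by bookkeeping with the fact that weak isometry is an equivalence relation (\cref{thm:wy-equivalence-relation}). The linchpin observation, which I would state explicitly first, is that the canonicalization $\mathcal{C}_X$ of any finite metric space $X$ is itself weakly isometric to $X$. This follows directly from \cref{def:canonicalization}: the function $\psi$ there is strictly increasing on $\spec(X)$, since $\psi(a_i)=2\binom{n}{2}-k+i$ is increasing in $i$ while the distances $a_1<a_2<\dots<a_k$ are listed in increasing order. Extending $\psi$ to a strictly increasing function on all of $\mathbb{R}^+$ and pairing it with the identity bijection on $\mathcal{C}_X=X$ exhibits a weak isometry $X \wi \mathcal{C}_X$. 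The same reasoning gives $Y \wi \mathcal{C}_Y$.

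For the forward direction, I would assume $X \wi Y$ and chain the weak isometries $\mathcal{C}_X \wi X \wi Y \wi \mathcal{C}_Y$, using symmetry and transitivity from \cref{thm:wy-equivalence-relation} to conclude $\mathcal{C}_X \wi \mathcal{C}_Y$. Since both $\mathcal{C}_X$ and $\mathcal{C}_Y$ are canonical by construction, \cref{lem:isometry-of-canonicals} immediately upgrades this weak isometry to a genuine isometry.

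For the converse, I would assume $\mathcal{C}_X$ and $\mathcal{C}_Y$ are isometric. An isometry is in particular a weak isometry (taking $\psi$ to be the identity), so $\mathcal{C}_X \wi \mathcal{C}_Y$; chaining $X \wi \mathcal{C}_X \wi \mathcal{C}_Y \wi Y$ and invoking transitivity once more yields $X \wi Y$, as desired.

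Since the whole argument is a short composition within the equivalence relation, I do not expect any serious obstacle. The only point genuinely requiring care is the very first step—verifying that the canonicalization really is weakly isometric to its source space—which is implicit in the \emph{canonical for $X$} terminology but on which the entire reduction rests, so I would make sure to justify it rather than leave it tacit.
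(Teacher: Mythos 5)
Your proof is correct and follows essentially the same route as the paper's: chain $\mathcal{C}_X \wi X \wi Y \wi \mathcal{C}_Y$ by transitivity and then invoke \cref{lem:isometry-of-canonicals} to upgrade the weak isometry of canonical spaces to an isometry, with the converse obtained by chaining in the other direction. The only difference is that you explicitly verify $X \wi \mathcal{C}_X$ from \cref{def:canonicalization}, a step the paper leaves implicit; this is a welcome bit of extra care rather than a departure in method.
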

\begin{proof}
If $\mathcal{C}$ is canonical for $X$ and $Y$, then $X \wi \mathcal{C} \wi Y$ and, hence, $X$ and $Y$ are weakly isometric. On the other hand, assume that $X$ is weakly isometric to $Y$. By the transitivity of weak isometry, we have
\begin{equation*}
    \mathcal{C}_X \wi X \wi Y \wi \mathcal{C}_Y,
\end{equation*}
therefore, $\mathcal{C}_X$ and $\mathcal{C}_Y$ are weakly isometric. By  \cref{lem:isometry-of-canonicals}, we have that they have to be isometric.
\end{proof}

\section{A dissimilarity measure for weak isometry}
\label{sec:dissimilarity}
We would like to define a pseudo-distance between finite metric spaces that measures how far are two metric spaces from being weakly isometric. We are looking for a pseudo-distance $d: \mathrm{FMS} \times \mathrm{FMS} \longrightarrow \mathbb{R}$ such that:
\begin{enumerate}
    \item $d((X,d_X),(Y,d_Y))=0$ if and only if $(X,d_X)$ and $(Y,d_Y)$ are weakly isometric,
    \item $d$ is ``continuous'', that means that $d$ is not discrete and takes into account the actual values assumed by the metrics $d_X$ and $d_Y$, and does not discriminate them only because of their ``combinatorial properties''.
\end{enumerate}
\begin{proposition}
There is no pseudo-distance that satisfies the two conditions above.
\end{proposition}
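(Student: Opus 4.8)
The plan is to first make precise the informal second requirement and then exhibit a single three-point obstruction. The key structural observation is that condition (1), together with any reasonable reading of ``continuity,'' forces every weak isometry class to be topologically closed, whereas such classes are generically \emph{not} closed, and this is what produces the contradiction.

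First I would pin down condition (2). The natural reading of ``takes into account the actual values'' and ``not discrete'' is that, after fixing a cardinality $n$ and identifying an FMS on $n$ points with its vector of pairwise distances in $\mathbb{R}^{\binom{n}{2}}$, the map $(X,Y)\mapsto d(X,Y)$ is continuous for the resulting Euclidean topology; equivalently, $d$ is sequentially continuous, so that if the distance matrices of $X_k$ converge to those of $X_\infty$ then $d(X_k,Y)\to d(X_\infty,Y)$. I would stress that this is the minimal hypothesis that rules out the ``discrete'' escape: the indicator pseudo-distance assigning $0$ to weakly isometric pairs and $1$ otherwise satisfies (1) but is locally constant on the complement of each class, which is precisely the discreteness that (2) is meant to forbid.

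Second, with this formalization I would build the counterexample on three points. Fix the space $W$ with pairwise distances $\{2,2,3\}$, so that $|\spec(W)|=2$. Consider the sequence $P_k$ with distances $\bigl\{2,\,2,\,2+\tfrac1k\bigr\}$. Each $P_k$ has the same order type as $W$ (two equal distances, one strictly larger), so by \cref{def:weak-isometry} we have $P_k\wi W$ via the identity bijection and the strictly increasing affine map $\psi(x)=2+k(x-2)$ sending $2\mapsto 2$ and $2+\tfrac1k\mapsto 3$; hence condition (1) gives $d(P_k,W)=0$ for every $k$. On the other hand, the distance vectors of $P_k$ converge to those of the equilateral space $E$ with distances $\{2,2,2\}$, for which $|\spec(E)|=1$. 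Sequential continuity then yields $d(E,W)=\lim_k d(P_k,W)=0$, so by condition (1) we would obtain $E\wi W$; but \cref{lem:spectrum-of-isomorphic-fms} forbids this, since $|\spec(E)|=1\neq 2=|\spec(W)|$. This contradiction shows that no $d$ can satisfy (1) and (2) simultaneously. Conceptually, the argument is the instance of the fact that $d(\cdot,W)^{-1}(\{0\})$, namely the weak isometry class of $W$, must be closed under continuity, while the class of ``two-equal-plus-one-larger'' triangles has the equilateral triangle as a limit point lying outside it.

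I expect the main obstacle to be the first step rather than the last: the proposition is only as strong as the formalization of ``continuous,'' so the real work is arguing that the chosen hypothesis faithfully captures conditions (1)--(2) and is not artificially strong. Once continuity along convergent distance matrices is granted, the three-point computation is immediate; the delicate point is justifying that any pseudo-distance genuinely sensitive to the metric values must be continuous in at least this weak, sequential sense, and hence cannot separate a non-closed weak isometry class from its boundary.
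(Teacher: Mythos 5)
Your three-point computation is internally sound, but the proof as a whole rests on a formalization of condition (2) that cannot be the intended one, and you yourself flagged this as the delicate point---it is indeed where the argument fails. You read ``continuous'' as sequential continuity of $d$ in the distance matrices, and you then derive a contradiction from condition (1) \emph{alone}: nowhere do you use symmetry or the triangle inequality, so your argument shows that no function whatsoever satisfies (1) together with your version of (2). That is too strong for the context. The proposition is stated for pseudo-distances precisely because the triangle inequality is the intended obstruction: immediately afterwards the paper drops the triangle inequality and constructs the dissimilarity $\hat{d}$ of \cref{eq:gh-diss}, which satisfies condition (1) (\cref{prop:dtilde}) and is meant to satisfy condition (2) --- in the paper's worked example one has $X \wi Z$ and yet $\hat{d}((X,d_X),(Y,d_Y))=0.5\neq 1=\hat{d}((Z,d_Z),(Y,d_Y))$, so $\hat{d}$ genuinely responds to the metric values and not only to the combinatorial type. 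Under your reading of (2), however, $\hat{d}$ is also ruled out, by exactly your own sequence: $\hat{d}(P_k,W)=0$ for all $k$ while $\hat{d}(E,W)>0$, so $\hat{d}(\cdot,W)$ is not sequentially continuous. A formalization of (2) that condemns the very object the proposition is designed to motivate proves a different (and, in context, vacuous) statement rather than the proposition itself.

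The paper's proof is shorter and makes the pseudo-distance hypothesis do all the work: if $X \wi Y$ then $d(X,Y)=0$ by (1), and the two triangle inequalities $d(X,Z)\leq d(X,Y)+d(Y,Z)$ and $d(Y,Z)\leq d(Y,X)+d(X,Z)$ force $d(X,Z)=d(Y,Z)$ for every $Z$. Hence $d$ descends to the set of weak isometry classes, i.e.\ it depends only on the combinatorial type of the spaces; ``discrete'' in condition (2) means exactly this, not discontinuity in a matrix topology. Your closed-class observation is still worth keeping, but as a remark that even after dropping the triangle inequality one cannot hope for continuity in your strong sense, not as a proof of the proposition. A separate minor flaw: your rescaling $\psi(x)=2+k(x-2)$ is negative on $[0,\,2-2/k)$, so it is not a map $\mathbb{R}^+\to\mathbb{R}^+$ as \cref{def:weak-isometry} requires; take instead a piecewise-linear $\psi$ equal to the identity on $[0,2]$, sending $2+\tfrac{1}{k}$ to $3$, and of slope $1$ beyond.
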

\begin{proof}
Let us assume that the pseudo-distance $d$ is such that it is zero if and only if two spaces are weakly isometric. 

Since $d((X,d_X),(Y,d_Y))=0$ and $d$ satisfies the triangle inequality, for any three spaces $(X,d_X)$, $(Y,d_Y)$ and $(Z,d_Z)$ with $(X,d_X)$ weakly isometric to $(Y,d_Y)$ we have that
\begin{equation}
    d((X,d_X),(Z,d_Z)) = d((Y,d_Y),(Z,d_Z)).
\end{equation}
Therefore, the function $d$ is constant on the equivalence classes given by weak isometry, hence it is discrete.
\end{proof}
Since it is not possible to find such a pseudo-distance, we will weaken our demands by dropping the assumption that the function $d$ satisfies the triangle inequality. 

\begin{definition}[dissimilarity measure]
\label{def:diss}
A dissimilarity measure $d$ over a set $S$ is a function $d:S\times S\longrightarrow \mathbb{R}$ such that: 
\begin{enumerate}
    \item there exists a number $d_0\in\mathbb{R}$, with $-\infty<d_0\leq d(s_1,s_2)<+\infty$, for all $s_1,s_2\in S$, and $d(s,s)=d_0$, for all $s\in S$,
    \item $d(s_1,s_2) = d(s_2,s_1)$, for all $s_1,s_2\in S$.
\end{enumerate}
\end{definition}

Since we will make use of the Gromov-Hausdorff distance, we recall its definition.
\begin{definition}[Gromov-Hausdorff distance]
\label{def:gh-distance}
Given two metric spaces $(X,d_X)$ and $(Y,d_Y)$, a \emph{correspondence} between them is a set $C\subseteq X\times Y$ such that $\pi_X(C) = X$ and $\pi_Y(C) = Y$, where $\pi_X$ and $\pi_Y$ are the canonical projections of the product space. We denote with $\mathfrak{C}(X,Y)$ the set of all correspondences between $X$ and $Y$. We define the \emph{distortion} of a correspondence $C$, with respect to the metrics $d_X$ and $d_Y$ as
\begin{equation}
    \label{eq:distortion}
    \operatorname{dis}(C,d_X,d_Y) = \sup_{(x,y),(x',y')\in C}\left|d_X(x,x')-d_Y(y,y')\right|.
\end{equation}
The Gromov-Hausdorff distance between $(X,d_X)$ and $(Y,d_Y)$ is 
\begin{equation}
    \label{eq:GH}
    d_{GH}((X,d_X),(Y,d_Y)) = \frac{1}{2}\inf_{C\in\mathfrak{C}(X,Y)}\operatorname{dis}(C,d_X,d_Y).
\end{equation}
\end{definition}
When $(X,d_X)$ and $(Y,d_Y)$ are finite metric spaces, the sumpremum in  \cref{eq:distortion} is actually a maximum, and the infimum in  \cref{eq:GH} is a minimum. 
We refer the interested reader to \cite{burago01} for further details. 
\bigskip
\begin{remark}
From now on, we will denote by $\sif$ the set of strictly increasing functions $\psi:\mathbb{R}^+\longrightarrow\mathbb{R}^+$, with $\psi(0) = 0$.\\
\end{remark}

Consider now the map $\hat{d}: \mathrm{FMS}\times \mathrm{FMS}\longrightarrow \mathbb{R}^+$ given by
\begin{equation}
\label{eq:gh-diss}
\begin{split}
        \hat{d}((X,d_X),(Y,d_Y)) = &\inf_{\psi \in \sif}d_{GH}((X,\psi\circ d_{X}),(Y,d_Y)) +\\
        + &\inf_{\psi \in \sif}d_{GH}((X,d_{X}),(Y,\psi\circ d_{Y})).
\end{split}
\end{equation}
We have the following result
\begin{proposition}
The map $\hat{d}$ is a dissimilarity on the collection of $\mathrm{FMS}$. %
\end{proposition}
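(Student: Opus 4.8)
The plan is to verify the two defining properties from \cref{def:diss} with the explicit choice $d_0 = 0$. The whole argument rests on two elementary facts about the Gromov–Hausdorff distance — it is nonnegative and symmetric — together with the observation that the identity map belongs to $\sif$ (it is strictly increasing and fixes $0$), so that it is always an admissible witness in each infimum.

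First I would establish that $\hat{d}$ is nonnegative and finite. Since every summand $d_{GH}(\cdot,\cdot)$ appearing in \eqref{eq:gh-diss} is a nonnegative real number, each of the two infima is bounded below by $0$, whence $\hat{d}((X,d_X),(Y,d_Y)) \geq 0$. For finiteness I would take $\psi = \mathrm{id} \in \sif$ in each infimum, which bounds it above by the ordinary Gromov–Hausdorff distance between the two finite metric spaces — a finite quantity. Thus each infimum lies in $[0,\infty)$, and so does their sum, giving the bound $0 \leq \hat{d} < +\infty$ required by the first part of property (1). The diagonal condition $\hat{d}((X,d_X),(X,d_X)) = 0 = d_0$ then follows by the same witness: with $\psi = \mathrm{id}$ each infimum is bounded above by $d_{GH}((X,d_X),(X,d_X)) = 0$, and combined with the lower bound $0$ both infima vanish, so their sum is $0$.

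Next I would check symmetry (property (2)), which is immediate from the symmetry of $d_{GH}$. Writing out $\hat{d}((Y,d_Y),(X,d_X))$ and applying $d_{GH}(A,B) = d_{GH}(B,A)$ to each of its two terms shows that they coincide, respectively, with the second and the first term defining $\hat{d}((X,d_X),(Y,d_Y))$; since addition is commutative the two quantities agree. This completes the verification.

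The one point requiring care — and the only genuine obstacle — is whether each term $d_{GH}((X,\psi\circ d_X),(Y,d_Y))$ is even meaningful, since for an arbitrary $\psi \in \sif$ the composite $\psi \circ d_X$ need not satisfy the triangle inequality, so $(X,\psi\circ d_X)$ may fail to be a metric space. I would resolve this either by restricting each infimum to those $\psi \in \sif$ for which $\psi \circ d_X$ is a genuine metric (a nonempty set, as $\mathrm{id}$ always qualifies), or by observing that the distortion in \eqref{eq:distortion}, and hence the expression in \eqref{eq:GH}, is well-defined for any symmetric function and thereby extends $d_{GH}$ to this slightly larger setting. Either convention leaves the steps above intact, because the argument uses only the witness $\psi = \mathrm{id}$ and the inequality $d_{GH} \geq 0$.
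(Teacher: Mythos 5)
Your proof is correct and follows essentially the same route as the paper's: nonnegativity and finiteness of each infimum from the corresponding properties of $d_{GH}$, and symmetry of $\hat{d}$ from that of $d_{GH}$. You are in fact more thorough than the paper, which never explicitly verifies the diagonal condition $\hat{d}((X,d_X),(X,d_X))=d_0$ and silently ignores that $\psi\circ d_X$ may fail the triangle inequality; your identity-witness argument and your remark on extending $d_{GH}$ (via correspondences and distortion) to such semi-metrics fill both gaps cleanly.
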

\begin{proof}
Since $d_{GH}$ is a distance, for all $(X,d_X)$ and $(Y,d_Y)$ it holds $$0\leq \inf_{\psi \in \sif}d_{GH}((X,\psi\circ d_{X}),(Y,d_Y)) < \infty$$ and 
$$0 \leq\inf_{\psi \in \sif}d_{GH}((X,d_{X}),(Y,\psi\circ d_{Y}))<\infty.$$ Therefore there exists $d_0=0$ such that  $d_0\leq\hat{d}(X,d_X),(Y,d_Y))<\infty$ for all $(X,d_X)$ and $(Y,d_Y)$. It is also possible to notice that $\hat{d}$ is symmetric by its very definition. Hence, $\hat{d}$ is a dissimilarity.
\end{proof}
Now, we show that that dissimilarity $\hat{d}$ satisfies our initial requests, especially the fact that is 0 if and only if two spaces are weakly isometric. 
\begin{proposition}
\label{prop:dtilde}
Given two \emph{FMS}s $(X,d_X)$, $(Y,d_Y)$, we have
\begin{equation*}
  \hat{d}((X,d_X),(Y,d_Y))=0 \iff (X,d_X)\wi(Y,d_Y).  
\end{equation*}
\end{proposition}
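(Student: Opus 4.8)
The plan is to exploit that $\hat{d}$ is a sum of two nonnegative terms, so $\hat{d}((X,d_X),(Y,d_Y))=0$ if and only if each of the two infima vanishes separately. For the implication ($\Leftarrow$), suppose $(X,d_X)\wi(Y,d_Y)$ via a bijection $\varphi$ and some $\psi\in\sif$; note $\psi(0)=0$ is automatic, since $\psi(d_X(x,x))=d_Y(\varphi(x),\varphi(x))=0$. Then $\varphi$ is an honest isometry from $(X,\psi\circ d_X)$ to $(Y,d_Y)$, so the first infimum is $0$; taking a strictly increasing extension of $\psi^{-1}$ to all of $\mathbb{R}^+$ with value $0$ at $0$ makes $\varphi^{-1}$ an isometry from $(Y,\psi^{-1}\circ d_Y)$ to $(X,d_X)$, so the second infimum is $0$ as well, and hence $\hat{d}=0$.

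For the implication ($\Rightarrow$), assume $\hat{d}=0$, so both infima are $0$. First I would settle the cardinalities. If $|X|\neq|Y|$, a pigeonhole argument on any correspondence forces a fixed positive contribution to the distortion that no choice of $\psi$ can remove: if $|X|<|Y|$ two points $y\neq y'$ of $Y$ must share a partner $x$, contributing $|(\psi\circ d_X)(x,x)-d_Y(y,y')|=d_Y(y,y')$ to the \emph{first} term, while if $|X|>|Y|$ two points $x\neq x'$ of $X$ share a partner $y$, contributing $|d_X(x,x')-(\psi\circ d_Y)(y,y)|=d_X(x,x')$ to the \emph{second} term, on which $\psi$ does not act. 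Either way one of the infima is bounded below by a positive constant, a contradiction; hence $|X|=|Y|=n$.

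Next, since $X$ and $Y$ are finite there are only finitely many correspondences, so I can interchange the (finite) minimum over $C\in\mathfrak{C}(X,Y)$ with the infimum over $\psi$ and extract a single correspondence $C^\ast$ along which $\inf_{\psi}\operatorname{dis}(C^\ast,\psi\circ d_X,d_Y)=0$, witnessed by a sequence $\psi_m$. The key structural step is that $C^\ast$ must be the graph of a bijection $\varphi$: two pairs $(x,y),(x,y')\in C^\ast$ with $y\neq y'$ would force $|\psi_m(0)-d_Y(y,y')|=d_Y(y,y')>0$, which is impossible since $\psi_m(0)=0$; thus $C^\ast$ is a surjective function, hence a bijection because $|X|=|Y|$. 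Passing to the limit along $\psi_m$ then shows that the assignment $b\colon\spec(X)\to\spec(Y)$ given by $b(d_X(x,x'))=d_Y(\varphi(x),\varphi(x'))$ is well defined, surjective, and weakly monotone (equal $X$-distances give equal $Y$-distances, and $a<a'$ gives $b(a)\le b(a')$ by taking limits of the strict inequalities $\psi_m(a)<\psi_m(a')$).

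Finally I would upgrade weak monotonicity to strict monotonicity, which is where the argument really earns its keep. The surjection $b$ gives $|\spec(X)|\ge|\spec(Y)|$; running the identical argument on the second vanishing infimum produces a weakly monotone surjection $\spec(Y)\to\spec(X)$, hence $|\spec(Y)|\ge|\spec(X)|$, so the two distance sets have equal cardinality. A weakly increasing surjection between finite totally ordered sets of equal size is automatically a strictly increasing bijection, so $b$ is strictly increasing; this yields exactly the two implications of \cref{def:isomorphism} for $\varphi$, i.e. $X\simeq Y$, and by the established equivalence between isomorphism and weak isometry we conclude $(X,d_X)\wi(Y,d_Y)$. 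I expect the main obstacle to be the behaviour of the infimum over $\psi$: it need not be attained, and one must prevent a degenerating rescaling (for instance one collapsing distances toward $0$) from spuriously making the distortion small. The clean pigeonhole pairs, on which one side contributes a term that $\psi$ cannot touch, together with the normalization $\psi(0)=0$, are what control this, and the symmetric use of the two terms of $\hat{d}$ is precisely what promotes weak monotonicity to the strict monotonicity an isomorphism requires.
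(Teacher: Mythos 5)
Your proposal is correct, and although it opens exactly as the paper does, it closes by a genuinely different route. Both arguments handle ($\Leftarrow$) identically, and both handle ($\Rightarrow$) by exploiting the finiteness of $\mathfrak{C}(X,Y)$ to fix a single correspondence along which a sequence $(\psi_m)\subseteq\sif$ makes the distortion vanish, then passing to the limit of $\psi_m$ on the finite distance set. From that point the paper proceeds analytically: it extracts limit functions $\psi_X:\spec(X)\longrightarrow\spec(Y)$ and $\psi_Y:\spec(Y)\longrightarrow\spec(X)$, applies the triangle inequality for $d_{GH}$ to show that $(X,\psi_Y\circ\psi_X\circ d_X)$ is isometric to $(X,d_X)$, deduces that $\psi_Y\circ\psi_X$ is a nondecreasing bijection of $\spec(X)$ and hence the identity, and finally extends $\psi_X$ by linear interpolation to a strictly increasing function on $\mathbb{R}^+$ witnessing the weak isometry. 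You instead argue combinatorially: you first establish $|X|=|Y|$ by a pigeonhole argument resting on $\psi(0)=0$ (a point the paper never addresses, although its conclusion implicitly requires it), show that the extracted correspondence is the graph of a bijection $\varphi$ for the same reason, and then replace the paper's composition-plus-triangle-inequality step by a counting argument: the two weakly monotone surjections between $\spec(X)$ and $\spec(Y)$ coming from the two terms of $\hat{d}$ force $|\spec(X)|=|\spec(Y)|$, and a weakly increasing surjection between finite chains of equal cardinality is automatically strictly increasing. You then conclude through \cref{def:isomorphism} and the paper's earlier equivalence of isomorphism with weak isometry, rather than constructing the rescaling $\psi$ explicitly. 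What the paper's route buys is a direct construction of the strictly increasing $\psi$ on all of $\mathbb{R}^+$; what your route buys is a more elementary endgame (no $d_{GH}$ triangle inequality, no interpolation) and a more complete treatment, since the cardinality of the spaces and the bijectivity of the correspondence are proved rather than assumed.
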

\begin{proof}
If $(X,d_X)\wi(Y,d_Y)$, there exists a strictly increasing function $\psi$ such that $(X,\psi\circ d_X)$ is isometric to $(Y,d_Y)$ and $(Y, \psi^{-1}\circ d_Y)$ is isometric to $(X,d_X)$. Since the Gromov-Hausdorff distance between two metric spaces is zero if and only if they are isometric, both the infima on the right hand side of \cref{eq:gh-diss} are $0$, hence $\hat{d}((X,d_X),(Y,d_Y))=0$.\\ 
On the other hand, suppose that $\hat{d}((X,d_X),(Y,d_Y))=0$, so that $$\inf_{\psi \in \sif}d_{GH}((X,\psi\circ d_{X}),(Y,d_Y))=0\,\,\mbox{and}\,\, \inf_{\psi \in\sif}d_{GH}((X,d_{X}),(Y,\psi\circ d_{Y}))=0.$$

Therefore, by the definition of infimum, there exist two sequences $(\psi_n)_{n\in\mathbb{N}} \subseteq \sif$ and $(\tilde{\psi}_n)_{n\in\mathbb{N}} \subseteq \sif$ such that
\begin{equation}
    \begin{split}
        \lim_{n\rightarrow \infty}d_{GH}((X,\psi_n\circ d_X),(Y,d_Y)) = 0 \\
        \lim_{n\rightarrow \infty}d_{GH}((X,d_X),(Y,\tilde{\psi}_n\circ d_Y)) = 0.
    \end{split}
\end{equation}
Let us focus our attention on the first sequence, $(\psi_n)_{n\in\mathbb{N}} \subseteq \sif$. By the finiteness of $\mathfrak{C}(X,Y)$, for every $n$ in $\mathbb{N}$ there exists a, possibly non-unique, correspondence $R_n$ in $\mathfrak{C}(X,Y)$ such that $\operatorname{dis}(R_n, \psi_n\circ d_X, d_Y)=d_{GH}((X,\psi_n\circ d_X),(Y,d_Y))$. By the axiom of choice, it is possible to construct a sequence $(\psi_n, R_n)_{n\in\mathbb{N}}\subseteq \sif\times\mathfrak{C}(X,Y)$ such that
$$
\lim_{n\to\infty}\operatorname{dis}(R_n, \psi_n\circ d_X, d_Y)=0.
$$
The set $\mathfrak{C}(X,Y)$ is finite, henceforth we can find a subsequence $(\hat{\psi}_n, \hat{R}_n)_{n\in\mathbb{N}}$ of $(\psi_n, R_n)_{n\in\mathbb{N}}$ such that there exists a $R_1$ in $\mathfrak{C}(X,Y)$ and a $\bar{n}$ in $\mathbb{N}$ with $\hat{R}_n=R_1$, for all $n\geq \bar{n}$. 

For this correspondence $R_1$, it holds
\begin{equation}
    \lim_{n\rightarrow\infty}|\hat{\psi}_n(d_X(x,x'))-d_Y(y,y')|=0 \quad \forall (x,y),(x',y')\in R_1.
\end{equation}
Hence, the restriction of the sequence $(\hat{\psi}_n)$ on the distance set $\mathrm{D}(X)$ converges to a function $\psi_X:\mathrm{D}(X)\longrightarrow \mathrm{D}(Y)$, such that $d_{GH}((X,\psi_X\circ d_X),(Y,d_Y))=0$. In the same way, we can prove the existence of a function $\psi_Y:\mathrm{D}(Y)\longrightarrow \mathrm{D}(X)$ such that $d_{GH}((X,d_X),(Y,\psi_Y\circ d_Y))=0$.

We observe that
\begin{equation}
\begin{split}
    d_{GH}((X,\psi_Y\circ\psi_X\circ d_X),(X,d_X))\leq d_{GH}((X,\psi_Y\circ\psi_X\circ d_X),(Y,\psi_Y\circ d_Y))+ \\ +d_{GH}((Y,\psi_Y\circ d_Y),(X,d_X)).  
\end{split}
\end{equation}
We already know that the second summand on the right hand side of the inequality is 0. For the first one it is easy to see that since $d_{GH}((X,\psi_X\circ d_X),(Y, d_Y))=0$, then also $d_{GH}((X,\psi_Y\circ\psi_X\circ d_X),(Y,\psi_Y\circ d_Y))=0$. Therefore, $d_{GH}((X,\psi_Y\circ\psi_X\circ d_X),(X,d_X))=0$, and $(X,\psi_Y\circ\psi_X\circ d_X)$ and $(X,d_X)$ are isometric. Since $\psi_X$ and $\psi_Y$ are both non decreasing functions, also their composition is non decreasing. Moreover, $\psi_Y\circ\psi_X$ has to be a bijective function from $\mathrm{D}(X)$ to itself, otherwise the two spaces fail to be isometric. Then, it has to be $\psi_Y\circ\psi_X = \operatorname{id}\vert_{\mathrm{D}(X)}$, therefore $\psi_X$ and $\psi_Y$ are invertible. It is possible to extend, by linear interpolation, the domain and codomain of $\psi_X$ to $\mathbb{R}^+$, thus we have a strictly increasing function $\psi_X$ such that $d_{GH}((X,\psi_X\circ d_X), (Y,d_Y))=0$ and this is equivalent to saying that $(X,d_X)$ and $(Y,d_Y)$ are weakly isometric.
\end{proof}

\begin{example}
In this example, we show the computation of $\tilde{d}$ between three finite metric spaces. Let us consider the spaces $(X,d_X)$, $(Y,d_Y)$ and $(Z,d_Z)$ depicted in  \cref{fig:example-dtilde}.
We can see that $\inf_{\psi\in\sif}d_{GH}((X,\psi\circ d_X), (Y,d_Y))=0$. In fact, if we take a sequence $(\psi_n)_{n\in\mathbb{N}}$ such that
\begin{equation*}
    \psi_n(3)=3,\ \psi_n(4)=4,\ \psi_n(5)=4+\frac{1}{n},
\end{equation*}
clearly $\lim_{n\to\infty} d_{GH}((X,\psi_n\circ d_X), (Y,d_Y))=0$. On the other hand, for $\hat{\psi}$ with
\begin{equation*}
    \hat{\psi}(3)=3,\ \hat{\psi}(4) = 4.5,
\end{equation*}
it holds $\inf_{\psi\in\sif}d_{GH}((X,d_X), (Y,\psi\circ d_Y))=d_{GH}((X,d_X), (Y,\hat{\psi}\circ d_Y))=0.5$. Therefore, $\tilde{d}((X,d_X),(Y,d_Y))=0.5$. Reasoning in a similar way it is possible to show that $\tilde{d}((Z,d_Z),(Y,d_Y))=1$. We also know by  \cref{prop:dtilde} that since $X$ and $Z$ are weakly isometric it has to be $\tilde{d}((X,d_X),(Z,d_Z))=0$. Hence,
\begin{equation*}
    \tilde{d}((Y,d_Y),(Z,d_Z))=1 > 0.5 = \tilde{d}((Y,d_Y),(X,d_X)) +\tilde{d}((X,d_X),(Z,d_Z))  
\end{equation*}
and the triangular inequality does not hold.
\end{example}

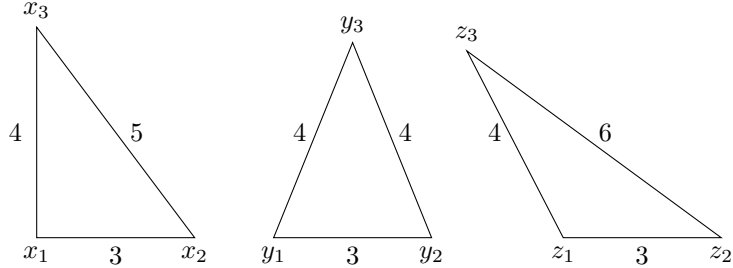
\begin{figure}[ht!]
    \centering
\begin{tikzpicture}[scale=0.7]
\draw (0,0) node[anchor=north]{$x_1$}
  -- (3,0) node[anchor=north]{$x_2$}
  -- (0,4) node[anchor=south]{$x_3$}
  -- cycle;
\draw (1.5, 0) node[anchor = north]{3};
\draw (1.6, 2) node[anchor =  west]{5};
\draw (-0.1, 2) node[anchor = east]{4};

\def\shiftx{4.5}
\draw (0+\shiftx,0) node[anchor=north]{$y_1$}
  -- (3+\shiftx,0) node[anchor=north]{$y_2$}
  -- (1.5+\shiftx,3.708) node[anchor=south]{$y_3$}
  -- cycle;
\draw (1.5+\shiftx, 0) node[anchor = north]{3};
\draw (0.8+\shiftx, 2) node[anchor =  east]{4};
\draw (2.2+\shiftx, 2) node[anchor = west]{4};

\def\shiftx{10}
\draw (0+\shiftx,0) node[anchor=north]{$z_1$}
  -- (3+\shiftx,0) node[anchor=north]{$z_2$}
  -- (-1.83+\shiftx,3.55) node[anchor=south]{$z_3$}
  -- cycle;
\draw (1.5+\shiftx, 0) node[anchor = north]{3};
\draw (-1+\shiftx, 2) node[anchor =  east]{4};
\draw (0.5+\shiftx, 2) node[anchor = west]{6};
\end{tikzpicture}
    \caption{Examples for the computation of $\hat{d}$.}
    \label{fig:example-dtilde}
\end{figure}

\section{Curvature sets of finite metric spaces}
\label{sec:curvature-sets}
Establishing whether two spaces are isometric or not is in general a computationally intensive problem. Hence, we would like to have a set of complete or incomplete invariants to study this problem. We will focus our attention firstly on the concept of curvature set introduced by Gromov in \cite{gromov07}. They have already been used by M\'emoli in \cite{memoli12}, they can in fact be used to obtain a lower bound for the Modified Gromov-Hausdorff distance between two metric spaces.  
\begin{definition}[curvature set - \cite{gromov07}]
\label{def:curvature-set}
Given a, non-necessarily finite, metric space $(X, d_X)$ we can consider the function
\begin{align}
\label{eq:def:fucntion-cset}
\begin{split}
\Psi_X^m :  X^{m} & \longrightarrow \mathbb{R}^{m \times m} \\
 (x_1, \dots , x_m) & \longmapsto M \text{ s.t. } M_{i,j} = d_X(x_i,x_j).
 \end{split}
\end{align}
We call $m$-th curvature set of $(X,d_X)$ the set 
\begin{equation}
\label{eq:def:curvature-set}
\mathrm{K}_m(X) := \textrm{im} \Psi_X^m.
\end{equation} 
\end{definition}
Let us see an example of some curvature sets for a finite metric space.
\begin{example}
\label{exmp:curvature-set}
Consider the set $X = \{x_1,x_2,x_3\}$ and endow it with the metric $d$ such that $d(x_1,x_2)=3$, $d(x_1,x_3) = 5$, $d(x_2,x_3) = 4$. Then, $\mathrm{K}_2(X)$ and $\mathrm{K}_3(x)$ are

\begin{equation*}
    \mathrm{K}_2(X) = \left\{
    \begin{bmatrix} 
        0 & 3\\
        3 & 0
    \end{bmatrix}, 
    \begin{bmatrix} 
        0 & 4\\
        4 & 0
    \end{bmatrix},
    \begin{bmatrix} 
        0 & 5\\
        5 & 0
    \end{bmatrix},
    \begin{bmatrix} 
        0 & 0\\
        0 & 0
    \end{bmatrix}
    \right\}.
\end{equation*}
\begin{equation*}
\begin{gathered}
    \mathrm{K}_3(X)=\Bigg\{
    P^T\begin{bmatrix}
    0 & 3 & 5 \\
    3 & 0 & 4 \\
    5 & 4 & 0
    \end{bmatrix}P,\  
    P^T\begin{bmatrix}
    0 & 0 & 3 \\
    0 & 0 & 3 \\
    3 & 3 & 0
    \end{bmatrix}P,\  
    P^T\begin{bmatrix}
    0 & 0 & 4 \\
    0 & 0 & 4 \\
    4 & 4 & 0
    \end{bmatrix}P,\ \\
    P^T\begin{bmatrix}
    0 & 0 & 5 \\
    0 & 0 & 5 \\
    5 & 5 & 0
    \end{bmatrix}P,\  
    \begin{bmatrix}
    0 & 0 & 0 \\
    0 & 0 & 0 \\
    0 & 0 & 0
    \end{bmatrix}\ \Bigg|\  
    P \textrm{ runs in $3\times 3$ permutation matrices}
    \Bigg\}.
\end{gathered}
\end{equation*}
\end{example}
Curvature sets encode information about finite metric subspaces of a given metric space. A similar, but coarser, invariant is the isometric sequence of a space, introduced by Hirasaka and Shinohara \cite{hirasaka18}. Curvature sets are important in virtue of the next theorem. 
\begin{theorem}[isometry of compact metric spaces - \cite{gromov07}] 
\label{thm:isometry-compact-ms}
Two compact metric spaces $X$ and $Y$ are isometric if and only if $\mathrm{K}_m(X) = \mathrm{K}_m(Y)$ for all $m \in \mathbb{N}$.
\end{theorem}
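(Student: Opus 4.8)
The plan is to prove the two implications separately, with essentially all the difficulty in the ``if'' direction. For the ``only if'' direction, suppose $f\colon X\to Y$ is an isometry. Then for any tuple $(x_1,\dots,x_m)\in X^m$ the matrices $\Psi_X^m(x_1,\dots,x_m)$ and $\Psi_Y^m(f(x_1),\dots,f(x_m))$ coincide, so $\mathrm{K}_m(X)\subseteq\mathrm{K}_m(Y)$; applying the same argument to $f^{-1}$ gives the reverse inclusion, and hence equality for every $m$.

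For the converse, assume $\mathrm{K}_m(X)=\mathrm{K}_m(Y)$ for all $m$. Since $X$ is compact it is separable, so I would fix a countable dense sequence $(x_i)_{i\in\mathbb{N}}$ in $X$. For each $N$ the distance matrix $\big(d_X(x_i,x_j)\big)_{1\le i,j\le N}$ lies in $\mathrm{K}_N(X)=\mathrm{K}_N(Y)$, so there is a tuple $(y_1^N,\dots,y_N^N)\in Y^N$ realizing it, i.e. $d_Y(y_i^N,y_j^N)=d_X(x_i,x_j)$ for all $i,j\le N$. The key step is to pass from these finite, a priori incompatible, realizations to a single global one. Here I would exploit the compactness of $Y$: extend each tuple to a point of $Y^{\mathbb{N}}$ and, since $Y^{\mathbb{N}}$ is compact and metrizable in the product topology, extract by a diagonal argument a subsequence $(N_k)$ along which $y_i^{N_k}\to y_i$ for every $i$. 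Continuity of $d_Y$ then yields $d_Y(y_i,y_j)=d_X(x_i,x_j)$ for all $i,j$, so $x_i\mapsto y_i$ is distance-preserving on the dense set $(x_i)$. Because $Y$ is complete, this extends uniquely to an isometric embedding $\bar\varphi\colon X\to Y$; by symmetry (using $\mathrm{K}_m(Y)=\mathrm{K}_m(X)$) there is also an isometric embedding $\psi\colon Y\to X$.

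It remains to upgrade these two embeddings into an actual isometry. The composition $\psi\circ\bar\varphi\colon X\to X$ is a distance-preserving self-map of the compact space $X$, and I would invoke the standard fact that such a map must be surjective: if some $a\in X$ were omitted, then $\varepsilon:=d_X\big(a,(\psi\circ\bar\varphi)(X)\big)>0$, and the iterates $a,\,(\psi\circ\bar\varphi)(a),\,(\psi\circ\bar\varphi)^2(a),\dots$ would be pairwise at distance $\ge\varepsilon$, contradicting total boundedness. Surjectivity of $\psi\circ\bar\varphi$ forces $\psi$ to be surjective, and being distance-preserving it is also injective; hence $\psi$ is a bijective isometry and $X$ and $Y$ are isometric.

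The main obstacle is the middle step: turning the family of finite realizations into one coherent infinite realization. The finite-dimensional data $\mathrm{K}_N$ only guarantee that each initial block of distances occurs \emph{somewhere} in $Y$, with no built-in consistency across different $N$; compactness of $Y$ (via sequential compactness of $Y^{\mathbb{N}}$ and a diagonal extraction) is exactly what repairs this. The self-embedding-is-surjective lemma is the other essential, though routine, ingredient, and is where compactness enters a second time.
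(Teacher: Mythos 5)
The paper never proves this statement: it is imported directly from Gromov's book \cite{gromov07} and used as a black box, so there is no internal proof to compare yours against. Judged on its own, your argument is correct, and it is essentially the classical reconstruction argument behind Gromov's theorem. The ``only if'' direction is indeed immediate. In the ``if'' direction your three ingredients are exactly the right ones: realizing each distance matrix $\bigl(d_X(x_i,x_j)\bigr)_{i,j\le N}$ of a countable dense sequence somewhere in $Y$ via $\mathrm{K}_N(X)=\mathrm{K}_N(Y)$; repairing the a priori inconsistency of these finite realizations across different $N$ by sequential compactness of $Y^{\mathbb{N}}$ and a diagonal extraction, so that coordinatewise limits give a single distance-preserving map on the dense set, which extends to an isometric embedding $\bar\varphi\colon X\to Y$ by completeness of $Y$; and then upgrading the pair of embeddings $\bar\varphi\colon X\to Y$, $\psi\colon Y\to X$ to a genuine isometry via the lemma that a distance-preserving self-map of a compact metric space is surjective, proved by your $\varepsilon$-separated-orbit argument. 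Two micro-points are left implicit, both routine: to conclude $\varepsilon=d_X\bigl(a,(\psi\circ\bar\varphi)(X)\bigr)>0$ you need the image to be closed, which holds because it is the continuous image of a compact space; and the assignment $x_i\mapsto y_i$ is well defined even if the dense sequence has repetitions, since $d_X(x_i,x_j)=0$ forces $d_Y(y_i,y_j)=0$. Neither affects correctness; your proposal is a complete, self-contained proof of a result the paper only cites.
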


\begin{remark}
Notice that every FMS is compact and then satisfies the hypothesis of the above theorem. 
\end{remark}
Therefore, checking the equality of curvature sets is a way to find out whether two metric spaces are isometric or not. In the general case, in which a metric space is not finite, we have to prove the equality of all curvature sets to ensure the isometry between metric spaces, but for the finite case the problem becomes easier. We can see that the $r$-th curvature set carries all the information included in all the $l$-th curvature sets for $l < r$.
\begin{lemma}
\label{lem:curvature-set}
For any two, possibly infinite, metric spaces $(X,d_X)$, $(Y,d_Y)$, if $\mathrm{K}_r(X) = \mathrm{K}_r(Y)$ for a certain $r\in \mathbb{N}$ then $\mathrm{K}_l(X)=\mathrm{K}_l(Y)$ for all $l\leq r$.
\end{lemma}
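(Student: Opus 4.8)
The plan is to show that the $l$-th curvature set is obtained from the $r$-th one by a single, space-independent operation, so that equality of the $r$-th sets forces equality of all lower ones. Concretely, for $l \le r$ I would introduce the \emph{principal-block projection} $\pi_{r,l} : \mathbb{R}^{r\times r} \to \mathbb{R}^{l\times l}$ sending an $r\times r$ matrix to its top-left $l\times l$ submatrix, and then establish the identity
\begin{equation*}
\mathrm{K}_l(X) = \pi_{r,l}\bigl(\mathrm{K}_r(X)\bigr)
\end{equation*}
for every metric space $(X,d_X)$. Granting this, applying $\pi_{r,l}$ to both sides of the hypothesis $\mathrm{K}_r(X)=\mathrm{K}_r(Y)$ immediately yields $\mathrm{K}_l(X)=\mathrm{K}_l(Y)$, and since $l\le r$ was arbitrary the lemma follows.

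For the inclusion $\pi_{r,l}\bigl(\mathrm{K}_r(X)\bigr) \subseteq \mathrm{K}_l(X)$ I would take any $M = \Psi_X^r(x_1,\dots,x_r)$ in $\mathrm{K}_r(X)$ and observe that its top-left $l\times l$ block records exactly the pairwise distances among $x_1,\dots,x_l$; that is, $\pi_{r,l}(M) = \Psi_X^l(x_1,\dots,x_l) \in \mathrm{K}_l(X)$. This direction is essentially the statement that a curvature set is closed under passing to sub-tuples of the indexing points.

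For the reverse inclusion I would exploit that $\Psi_X^m$ is defined on all of $X^m$, so repeated entries are permitted. Given any $A = \Psi_X^l(x_1,\dots,x_l) \in \mathrm{K}_l(X)$, I would pad the tuple to length $r$ by repeating its last coordinate, forming $(x_1,\dots,x_l,x_l,\dots,x_l)$, and set $B = \Psi_X^r(x_1,\dots,x_l,x_l,\dots,x_l)$. By construction the top-left $l\times l$ block of $B$ is unchanged, so $\pi_{r,l}(B)=A$ with $B\in\mathrm{K}_r(X)$, giving $A \in \pi_{r,l}\bigl(\mathrm{K}_r(X)\bigr)$.

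The only step requiring care is this padding, which is exactly where the convention that tuples may repeat points is used: if the definition of $\Psi_X^m$ in \cref{def:curvature-set} were restricted to tuples of distinct points, a space with fewer than $r$ points would have an empty $r$-th curvature set and the padding would be impossible. Since no such restriction is imposed, the argument runs uniformly over all (possibly infinite) metric spaces, and in particular no appeal to finiteness is needed.
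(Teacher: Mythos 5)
Your proof is correct and follows essentially the same route as the paper's: the paper realizes each matrix of $\mathrm{K}_l(X)$ as a matrix of $\mathrm{K}_r(X)$ with $r-l$ rows and columns removed, which is exactly your principal-block projection $\pi_{r,l}$ up to allowing arbitrary index sets rather than just the top-left block. If anything, your write-up is more careful on the surjectivity direction, spelling out the repeat-a-point padding argument (and why repeated entries in the tuples are what make it work) that the paper asserts without proof.
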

\begin{proof}
Each matrix of $\mathrm{K}_l(X)$ can be obtained from a matrix of $\mathrm{K}_r(X)$ removing $r-l$ rows and columns. Given a set of indices $I := \{i_1, \dots, i_k\}\subseteq \{1, \dots, n\}$ and a matrix $M \in \mathbb{R}^{n\times n}$ we can define $M_I$ as the matrix obtained by $M$ removing the columns and the rows whose indices are in $I$. Then
\begin{equation}
    \begin{split}
    \mathrm{K}_l(X) =& \left\{M_I\ \middle|\ M\in \mathrm{K}_r(X),\ I\subseteq \{1,\dots,n\},\ |I|=r-l\right\} = \\
    =& \left\{M_I\ \middle|\ M\in \mathrm{K}_r(Y),\ I\subseteq \{1,\dots,n\},\ |I|=r-l\right\} = \mathrm{K}_l(Y).
    \end{split}
\end{equation}
\end{proof}
For finite metric spaces we can further improve this result. In the following theorem, we show that given a finite metric space $X$ of cardinality $n$, all the curvature sets are determined by $\mathrm{K}_n(X)$.

\begin{theorem}
\label{thm:m-th-curvature-set}
Let $(X, d_X)$ and $(Y, d_Y)$ be two finite metric space of cardinality $n$. Then
\begin{equation}
\mathrm{K}_m(X) = \mathrm{K}_m(Y) \quad \forall m \in \mathbb{N} \iff \mathrm{K}_n(X) = \mathrm{K}_n(Y).
\end{equation}
\end{theorem}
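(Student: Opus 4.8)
The forward implication is trivial, so the plan is to concentrate on the converse: assuming $\mathrm{K}_n(X) = \mathrm{K}_n(Y)$, I want to recover $\mathrm{K}_m(X)$ for every $m$ from the single set $\mathrm{K}_n(X)$ by a rule that makes no reference to the underlying space. First I would introduce a \emph{duplication} operation on square matrices. For a map $\tau\colon\{1,\dots,m\}\to\{1,\dots,n\}$ and a matrix $G\in\mathbb{R}^{n\times n}$, let $G^\tau\in\mathbb{R}^{m\times m}$ be defined by $(G^\tau)_{ij} = G_{\tau(i),\tau(j)}$; this is the matrix obtained from $G$ by repeating and reordering rows and columns according to $\tau$ (with the diagonal inherited from that of $G$).

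The central claim is the identity
\begin{equation}
\mathrm{K}_m(X) = \left\{ G^\tau \ \middle|\ G \in \mathrm{K}_n(X),\ \tau\colon \{1,\dots,m\}\to\{1,\dots,n\} \right\}, \tag{$\star$}
\end{equation}
valid for every $m\in\mathbb{N}$. To prove the inclusion $\subseteq$, fix an enumeration $X=\{q_1,\dots,q_n\}$ and let $G = \Psi_X^n(q_1,\dots,q_n)$ be the full distance matrix, so $G\in\mathrm{K}_n(X)$. Any $M\in\mathrm{K}_m(X)$ is realised by some tuple $(x_1,\dots,x_m)$, each $x_i$ equals some $q_{\tau(i)}$, and then $M_{ij} = d_X(x_i,x_j) = G_{\tau(i),\tau(j)} = (G^\tau)_{ij}$. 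For the reverse inclusion, any $G'\in\mathrm{K}_n(X)$ is realised by an $n$-tuple $(q'_1,\dots,q'_n)$, and for each $\tau$ the tuple $(q'_{\tau(1)},\dots,q'_{\tau(m)})\in X^m$ realises $(G')^\tau$, so $(G')^\tau\in\mathrm{K}_m(X)$.

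Once $(\star)$ is in hand the theorem is immediate: its right-hand side depends on $X$ only through the set $\mathrm{K}_n(X)$, the remaining data --- the collection of maps $\tau$ and the duplication rule --- being fixed once $m$ and $n$ are fixed and identical for $X$ and $Y$ (both of cardinality $n$). Hence $\mathrm{K}_n(X)=\mathrm{K}_n(Y)$ forces $\mathrm{K}_m(X)=\mathrm{K}_m(Y)$ for all $m$.

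The step I expect to be the main obstacle is the inclusion $\mathrm{K}_m(X)\subseteq\{G^\tau\mid\cdots\}$, and specifically the realisation that one may always take $G$ to be the full distance matrix of $X$ irrespective of how few points the tuple $(x_1,\dots,x_m)$ actually uses: any repetitions or omissions among the $x_i$ are absorbed into $\tau$, which need be neither injective nor surjective. A more pedestrian alternative would split into $m\le n$, settled directly by \cref{lem:curvature-set} with $r=n$, and $m>n$, where each $M\in\mathrm{K}_m(X)$ factors through a surjection onto its $k\le n$ distinct points together with a genuine $k$-point distance matrix; but that route forces a union over $k$ and the extra step of recovering distinct-point submatrices from $\mathrm{K}_k(X)$, so I would prefer the unified statement $(\star)$.
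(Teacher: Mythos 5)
Your proof is correct, but it takes a genuinely different route from the paper's. The paper splits the converse into two cases: for $m \le n$ it invokes \cref{lem:curvature-set} with $r = n$, and for $m > n$ it takes a matrix $M \in \mathrm{K}_m(X)$, passes to the $k \le n$ distinct points appearing in its realising tuple, uses $\mathrm{K}_k(X) = \mathrm{K}_k(Y)$ (again via \cref{lem:curvature-set}) to find matching points $y_1,\dots,y_k$ in $Y$, and then lifts the resulting point-level bijection $\phi$ back up to conclude $M = \Psi_Y^m(\phi(x_1),\dots,\phi(x_m)) \in \mathrm{K}_m(Y)$. You instead prove the single reconstruction identity $(\star)$, expressing $\mathrm{K}_m(X)$ for \emph{every} $m$ as the image of $\mathrm{K}_n(X)$ under the duplication maps $G \mapsto G^\tau$; since this rule depends only on $m$ and $n$ and not on the space, equality of the $n$-th curvature sets propagates to all $m$ at once. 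Both arguments rest on the same basic observation --- repetitions in a tuple can be absorbed into an index map --- but yours packages it as a formula rather than a back-and-forth construction, which buys you three things: no case split, no dependence on \cref{lem:curvature-set}, and no need to build any correspondence between points of $X$ and points of $Y$. It also yields a slightly stronger and independently interesting statement, namely that $\mathrm{K}_n(X)$ \emph{determines} every $\mathrm{K}_m(X)$ explicitly, not merely that equality of the former implies equality of the latter. What the paper's route buys in exchange is economy within the broader text: \cref{lem:curvature-set} is needed anyway (e.g.\ it is reused for reduced curvature sets in \cref{lem:reduced-curvature-set} and \cref{thm:equivalence-curvature-sets}), so the paper leans on it rather than introducing a new mechanism. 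One small point worth making explicit if you write this up: in the reverse inclusion of $(\star)$, the $n$-tuple realising $G' \in \mathrm{K}_n(X)$ may itself contain repetitions, which is harmless precisely because your $\tau$ is not required to be injective or surjective --- you note this, and it is the crux of why the unified statement works.
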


\begin{proof}
The forward implication is given by the hypothesis. We have to prove the other direction $\impliedby$. We already know, by  \cref{lem:curvature-set}, that $\mathrm{K}_m(X) = \mathrm{K}_m(Y)$ for all $m\leq n$. We have only to prove the case in which $m > n$. For every matrix $M \in \mathrm{K}_m(X)$, we can find $m$ points $x_1, \dots, x_m$ of $X$ such that $\Psi_X^m(x_1,\dots,x_m)=M$. Of these $m$ points at most $k \leq n$ of them can be different, let them be $x_{i_1}, \dots, x_{i_k}$. Then, the matrix $\Psi_X^k(x_{i_1}, \dots, x_{i_k})$ is in $\mathrm{K}_k(X)=\mathrm{K}_k(Y)$ by  \cref{lem:curvature-set}. Then, we have $y_1, \dots, y_k \in Y$ such that $\Psi_X^k(x_{i_1}, \dots, x_{i_k}) = \Psi_Y^k(y_1, \dots, y_k)$, and it means that
\begin{equation}
    \label{eq:thm-curvature-set-lower-dim}
    d_X(x_{i_a},x_{i_b}) = d_Y(y_a,y_b).
\end{equation}
Consider the bijection $\phi: \{x_{i_1}, \dots, x_{i_k} \}\longrightarrow\{y_1, \dots, y_k\}$ with $\phi(x_{i_j}) = y_j$, $j = 1,\dots, k$. Thanks to  \cref{eq:thm-curvature-set-lower-dim} we have that 
$$\Psi_X^m(x_1,\dots,x_m) = \Psi_Y^m(\phi(x_1),\dots,\phi(x_m)) \in \mathrm{K}_m(Y).$$ 
Therefore, $\mathrm{K}_m(X) \subseteq \mathrm{K}_m(Y)$. Analogously, we can see that $\mathrm{K}_m(Y) \subseteq \mathrm{K}_m(X)$, hence they must be equal. 
\end{proof}

We have seen in  \cref{exmp:curvature-set} that when we compute the $m$-th curvature set of a metric spaces we take $m$-tuples of points of $X$ with repetitions. This means computing $n^m$ matrices. We would like to reduce such a computational cost, and we try to do so introducing the concept of reduced curvature set.

\begin{definition}[reduced curvature set]
\label{def:reduced-curvature-set}
Consider a metric space $(X,d_X)$ and the associated function $\Psi_X^m :  X^{m} \longrightarrow \mathbb{R}^{m \times m}$ defined in \cref{eq:def:fucntion-cset}. We call $m$-th reduced curvature set of $(X,d_X)$ the set
\begin{equation}
\label{eq:def:reduced-curvature-set}
\tilde{\mathrm{K}}_m(X) = \left\{ \Psi_X^m(x_1, \dots, x_m)\ \middle|\ x_1,\dots, x_m \in X \textrm{ and } x_i \neq x_j \textrm{ if } i \neq j \right\}.
\end{equation}
\end{definition}
As we can see from the definition, to obtain the $m$-th reduced curvature set we need to compute $\frac{n!}{(n-m)!}$ matrices. We want to show that we do not lose any information with this reduction.
\begin{lemma}
\label{lem:reduced-curvature-set}
For any two metric spaces $(X,d_X)$, $(Y,d_Y)$, if $\tilde{\mathrm{K}}_r(X) = \tilde{\mathrm{K}}_r(Y)$ for a certain $r\in \mathbb{N}$, then $\tilde{\mathrm{K}}_l(X)=\tilde{\mathrm{K}}_l(Y)$ for all $l\leq r$.
\end{lemma}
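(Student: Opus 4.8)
The plan is to mirror the proof of \cref{lem:curvature-set}, the only new ingredient being the bookkeeping forced by the distinctness constraint in \cref{def:reduced-curvature-set}. Concretely, I would try to establish the single formula
\[
\tilde{\mathrm{K}}_l(X) = \left\{ M_I \ \middle|\ M \in \tilde{\mathrm{K}}_r(X),\ I \subseteq \{1,\dots,r\},\ |I| = r-l \right\},
\]
where $M_I$ denotes, as in the proof of \cref{lem:curvature-set}, the matrix obtained from $M$ by deleting the rows and columns indexed by $I$. Since the right-hand side depends on $X$ only through $\tilde{\mathrm{K}}_r(X)$, the same formula for $Y$ together with the hypothesis $\tilde{\mathrm{K}}_r(X) = \tilde{\mathrm{K}}_r(Y)$ would immediately give $\tilde{\mathrm{K}}_l(X) = \tilde{\mathrm{K}}_l(Y)$ for every $l \leq r$.

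The inclusion $\supseteq$ is the easy half and holds unconditionally: if $M = \Psi_X^r(x_1,\dots,x_r)$ with $x_1,\dots,x_r$ pairwise distinct, then any subcollection of $l$ of these points is again pairwise distinct, so deleting the complementary $r-l$ rows and columns yields a matrix of the form $\Psi_X^l$ evaluated on distinct points, i.e.\ an element of $\tilde{\mathrm{K}}_l(X)$. No collapse of repeated points can occur here, which is exactly what made the analogous step in \cref{lem:curvature-set} clean.

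The reverse inclusion $\subseteq$ is where the real work lies, and I expect it to be the main obstacle. Given $N = \Psi_X^l(x_1,\dots,x_l) \in \tilde{\mathrm{K}}_l(X)$ with the $x_i$ distinct, I must \emph{extend} this $l$-tuple to an $r$-tuple $x_1,\dots,x_r$ of pairwise distinct points of $X$, so that the associated matrix $M \in \tilde{\mathrm{K}}_r(X)$ restricts to $N$ after deleting the last $r-l$ rows and columns. Such an extension exists precisely when $X$ contains at least $r$ points. The crucial observation is that this cardinality condition is supplied for free in the non-degenerate case: if the common set $\tilde{\mathrm{K}}_r(X) = \tilde{\mathrm{K}}_r(Y)$ is nonempty, then both $X$ and $Y$ must contain at least $r$ points, so the extension can be carried out symmetrically on both sides and the formula above closes. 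I would therefore run the argument under this nonemptiness, flagging the genuinely degenerate situation---where $\tilde{\mathrm{K}}_r(X) = \tilde{\mathrm{K}}_r(Y) = \emptyset$ yet $X$ and $Y$ have fewer than $r$ and possibly different numbers of points---as the case demanding separate care, since there the passage from level $r$ down to level $l$ is not automatic.
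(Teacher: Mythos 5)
You follow the paper's route exactly---its entire proof of \cref{lem:reduced-curvature-set} is the single sentence ``The proof is analogous to that of \cref{lem:curvature-set}'', i.e.\ precisely the restriction formula you wrote down---but your treatment is more careful than the paper's, and the unease you flag at the end is fully justified: the degenerate case is not a case ``demanding separate care'', it is a counterexample to the lemma as literally stated. Take $X$ a two-point space, $Y$ a three-point space, $r=4$, $l=3$. Neither space has four pairwise distinct points, so $\tilde{\mathrm{K}}_4(X)=\emptyset=\tilde{\mathrm{K}}_4(Y)$ and the hypothesis holds; yet $\tilde{\mathrm{K}}_3(X)=\emptyset$ while $\tilde{\mathrm{K}}_3(Y)\neq\emptyset$, so the conclusion fails. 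This is exactly where the analogy with \cref{lem:curvature-set} breaks down: in the unreduced setting any $l$-tuple can be padded to an $r$-tuple by repeating a point, so the restriction formula holds unconditionally, whereas in the reduced setting padding requires $r$ pairwise distinct points, i.e.\ $|X|\geq r$, which is equivalent to $\tilde{\mathrm{K}}_r(X)\neq\emptyset$. The statement your argument actually proves---and the statement that should be written---carries the extra hypothesis $\tilde{\mathrm{K}}_r(X)=\tilde{\mathrm{K}}_r(Y)\neq\emptyset$ (equivalently, that both spaces have at least $r$ points). That version suffices for the paper: in its only use, inside the proof of \cref{thm:equivalence-curvature-sets}, both spaces have cardinality $n$ and the lemma is invoked with $r=m\leq n$, so the relevant reduced curvature sets are nonempty. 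So state the lemma with the nonemptiness hypothesis and close the argument there; no separate care can rescue the empty case, because there is nothing true to prove in it.
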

\begin{proof}
The proof is analogous to that of   \cref{lem:curvature-set}.
\end{proof}

\begin{theorem}
\label{thm:equivalence-curvature-sets}
Let $(X, d_X)$ and $(Y, d_Y)$ be two finite metric spaces of cardinality $n$. For any $m \leq n$, we have
\begin{equation}
\mathrm{K}_m(X) = \mathrm{K}_m(Y) \iff \tilde{\mathrm{K}}_m(X) = \tilde{\mathrm{K}}_m(Y).
\end{equation}
\end{theorem}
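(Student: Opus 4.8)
The plan is to prove both implications by giving intrinsic, matrix-level descriptions relating $\mathrm{K}_m$ and $\tilde{\mathrm{K}}_m$, so that equality of one collection of matrices can be read off from equality of the other without ever referring back to the underlying points.

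For the forward implication $\implies$, I would first observe that $\tilde{\mathrm{K}}_m(X)$ is exactly the subset of $\mathrm{K}_m(X)$ singled out by a property of the matrices themselves. Indeed, a tuple $(x_1,\dots,x_m)$ has pairwise distinct entries if and only if the associated matrix $M = \Psi_X^m(x_1,\dots,x_m)$ has every off-diagonal entry strictly positive: distinct points have positive distance, whereas if $x_i = x_j$ for some $i\neq j$ then $M_{i,j} = 0$. Hence
\begin{equation*}
\tilde{\mathrm{K}}_m(X) = \left\{ M \in \mathrm{K}_m(X) \ \middle|\ M_{i,j} > 0 \textrm{ for all } i \neq j \right\},
\end{equation*}
and the right-hand side is a function of $\mathrm{K}_m(X)$ alone. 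Thus $\mathrm{K}_m(X) = \mathrm{K}_m(Y)$ immediately yields $\tilde{\mathrm{K}}_m(X) = \tilde{\mathrm{K}}_m(Y)$, and this direction needs neither the cardinality hypothesis nor $m\leq n$.

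For the reverse implication $\impliedby$, the idea is to reconstruct all of $\mathrm{K}_m(X)$ from the reduced curvature sets $\tilde{\mathrm{K}}_k(X)$ with $k \leq m$ via an expansion operation that duplicates rows and columns. Given a surjection $\sigma \colon \{1,\dots,m\} \twoheadrightarrow \{1,\dots,k\}$ and a $k\times k$ matrix $N$, set $(N^\sigma)_{i,j} := N_{\sigma(i),\sigma(j)}$. If a tuple $(x_1,\dots,x_m)$ realizes a matrix $M \in \mathrm{K}_m(X)$ and uses exactly $k$ distinct points $v_1,\dots,v_k$, then taking $\sigma(i)$ to be the label of the distinct value equal to $x_i$ gives $M = N^\sigma$ with $N = \Psi_X^k(v_1,\dots,v_k) \in \tilde{\mathrm{K}}_k(X)$; conversely every such $N^\sigma$ is realized by $(v_{\sigma(1)},\dots,v_{\sigma(m)})$ and so lies in $\mathrm{K}_m(X)$. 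Therefore
\begin{equation*}
\mathrm{K}_m(X) = \left\{ N^\sigma \ \middle|\ 1 \leq k \leq m,\ N \in \tilde{\mathrm{K}}_k(X),\ \sigma \colon \{1,\dots,m\} \twoheadrightarrow \{1,\dots,k\} \right\},
\end{equation*}
which exhibits $\mathrm{K}_m(X)$ as depending only on the family $\{\tilde{\mathrm{K}}_k(X)\}_{k \leq m}$.

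To finish, I would invoke \cref{lem:reduced-curvature-set}: since $\tilde{\mathrm{K}}_m(X) = \tilde{\mathrm{K}}_m(Y)$, we also have $\tilde{\mathrm{K}}_k(X) = \tilde{\mathrm{K}}_k(Y)$ for every $k \leq m$, and substituting these equalities into the displayed description of $\mathrm{K}_m$ gives $\mathrm{K}_m(X) = \mathrm{K}_m(Y)$. The hypothesis $m \leq n$ enters precisely here, guaranteeing that the reduced sets $\tilde{\mathrm{K}}_k$ in play are the informative ones (for $k>n$ they are empty, which is exactly why the equivalence would break down for $m>n$). I expect the main obstacle to be the bookkeeping in this reverse direction: verifying carefully that the expansion map $N \mapsto N^\sigma$ produces every matrix of $\mathrm{K}_m(X)$, including the degenerate ones arising from repeated points, and that ranging over all $N$ and all surjections $\sigma$ makes the displayed set genuinely independent of any choice of labelling of the distinct values.
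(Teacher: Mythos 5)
Your proposal is correct and follows essentially the same route as the paper: the forward direction via the intrinsic characterization of $\tilde{\mathrm{K}}_m$ inside $\mathrm{K}_m$ as the matrices whose zero entries lie only on the diagonal, and the reverse direction by passing to the distinct points of a degenerate tuple, invoking \cref{lem:reduced-curvature-set} to transfer the lower-order reduced curvature sets, and re-expanding. Your surjection operator $N \mapsto N^\sigma$ is just a tidier bookkeeping of the paper's bijection $\phi$ between the distinct points $x_{i_1},\dots,x_{i_k}$ and points $y_1,\dots,y_k$ of $Y$ (and it incidentally avoids the paper's index slip, where $\tilde{\mathrm{K}}_{m-k}$ should read $\tilde{\mathrm{K}}_{k}$).
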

\begin{proof}
Assume $\mathrm{K}_m(X) = \mathrm{K}_m(Y)$. Then a matrix $M \in \mathrm{K}_m(X)$ is also an element of $\tilde{\mathrm{K}}_m(X)$ if and only if $(M_{i,j} = 0 \iff i = j)$. The same argument holds also for $\mathrm{K}_m(Y)$, therefore, given $M \in \tilde{\mathrm{K}}_m(X)$, we have $M \in \mathrm{K}_m(X) = \mathrm{K}_m(Y)$ and $M\in \mathrm{K}_m(Y)$. Since $M$ has null entries only in its diagonal, $M \in \tilde{\mathrm{K}}_m(Y)$. In this way, we can see that $\tilde{\mathrm{K}}_m(X) \subseteq \tilde{\mathrm{K}}_m(Y)$ and $\tilde{\mathrm{K}}_m(Y) \subseteq \tilde{\mathrm{K}}_m(X)$, hence they are equal. Assume now that $\tilde{\mathrm{K}}_m(X) = \tilde{\mathrm{K}}_m(Y)$. We want to prove that, for any $M \in \mathrm{K}_m(X) \setminus \tilde{\mathrm{K}}_m(X)$, we have $M \in \mathrm{K}_m(Y)$. We know there are $x_1, \dots, x_m \in X$ such that $M = \Psi_X^m(x_1,\dots,x_m)$, where at most $k < m$ of the points are different. Suppose these points are $x_{i_1},\dots,x_{i_k}$. For  \cref{lem:reduced-curvature-set}, we have that $\tilde{\mathrm{K}}_{m-k}(X) = \tilde{\mathrm{K}}_{m-k}(Y)$, then we have $y_1, \dots. y_k$ points of $Y$ such that 
\begin{equation*}
    \Psi_X^{m-k}(x_{i_1},\dots, x_{i_k})= \Psi_Y^{m-k}(y_{1},\dots, y_{k}).
\end{equation*}
Hence, given the bijection $\phi: \{x_{i_1}, \dots, x_{i_k} \}\longrightarrow\{y_1, \dots, y_k\}$ with $\phi(x_{i_j}) = y_j$, $j = 1,\dots, k$, we have
\begin{equation}
    M = \Psi_X^{m}(x_{1},\dots, x_{m}) = \Psi_Y^{m}(\phi(x_{1}),\dots, \phi(x_{m})) \in \mathrm{K}_m(Y).
\end{equation}
Then, $\mathrm{K}_m(X)\subseteq \mathrm{K}_m(Y)$ and reasoning in the same way we have $\mathrm{K}_m(Y)\subseteq \mathrm{K}_m(X)$, therefore they are equal.
\end{proof}
Thanks to the last theorem, we can see that reduced curvature sets carry the same information given by the non reduced version. Notice that for finite metric spaces we do not have $m$-th reduced curvature sets, with $m$ greater than the number of points of the space. In the following theorem, we observe that isometry of finite metric spaces is characterised by the $n$-th reduced curvature set.

\begin{theorem}
Two finite metric spaces $(X, d_X)$ and $(Y, d_Y)$ of cardinality $n$ are isometric if and only if $\tilde{\mathrm{K}}_n(X) = \tilde{\mathrm{K}}_n(Y)$.
\end{theorem}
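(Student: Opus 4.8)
The plan is to treat this statement as a corollary of the three curvature-set results already established, chaining them into a single string of equivalences. First I would invoke \cref{thm:isometry-compact-ms}: since every finite metric space is compact, $(X,d_X)$ and $(Y,d_Y)$ are isometric if and only if $\mathrm{K}_m(X) = \mathrm{K}_m(Y)$ for every $m \in \mathbb{N}$. This replaces isometry by equality of all (ordinary) curvature sets.

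Next I would collapse the infinite family of conditions to a single one. Because $X$ and $Y$ share the cardinality $n$, \cref{thm:m-th-curvature-set} tells us that the family $\{\mathrm{K}_m(X) = \mathrm{K}_m(Y)\}_{m\in\mathbb{N}}$ is equivalent to the lone equality $\mathrm{K}_n(X) = \mathrm{K}_n(Y)$. Finally I would pass from ordinary to reduced curvature sets by applying \cref{thm:equivalence-curvature-sets} with the index $m = n$; since $n \leq n$ satisfies the hypothesis of that theorem, $\mathrm{K}_n(X) = \mathrm{K}_n(Y)$ holds if and only if $\tilde{\mathrm{K}}_n(X) = \tilde{\mathrm{K}}_n(Y)$. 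Concatenating the three biconditionals gives
\[
(X,d_X) \text{ isometric to } (Y,d_Y) \iff \mathrm{K}_m(X) = \mathrm{K}_m(Y)\ \forall m \iff \mathrm{K}_n(X) = \mathrm{K}_n(Y) \iff \tilde{\mathrm{K}}_n(X) = \tilde{\mathrm{K}}_n(Y),
\]
which is exactly the claim.

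Honestly, there is no genuine obstacle remaining here: all of the combinatorial content has already been absorbed into the preceding theorems, and the present statement is just their composition. The only points requiring a moment of care are bookkeeping ones, namely checking that the hypotheses align. In particular I would note that $n$ is a legitimate index for the reduced curvature set, since the spaces have exactly $n$ points and hence $\tilde{\mathrm{K}}_n(X)$ is the (nonempty) set of distance matrices obtained from orderings of all of $X$; and that the choice $m = n$ indeed falls within the range $m \leq n$ demanded by \cref{thm:equivalence-curvature-sets}. With these verifications in place the equivalences fit together without gaps.
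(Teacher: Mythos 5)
Your proposal is correct and follows essentially the same route as the paper's own proof: both chain \cref{thm:isometry-compact-ms} (finiteness gives compactness), \cref{thm:m-th-curvature-set}, and \cref{thm:equivalence-curvature-sets} with $m=n$ into a single string of biconditionals. Your added bookkeeping remarks (that $m=n$ satisfies $m\leq n$ and that $\tilde{\mathrm{K}}_n(X)$ is nonempty) are sound and only make the argument more explicit.
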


\begin{proof}
We need to prove the ``$\impliedby$'' implication only. Since $X$ and $Y$ are finite they are compact and by  \cref{thm:isometry-compact-ms} we know that they are isometric if and only if $\mathrm{K}_m(X) = \mathrm{K}_m(Y)$ for all $m \in \mathbb{N}$. By  \cref{thm:m-th-curvature-set}, since $X$ and $Y$ are finite, we know that this is true if and only if $\mathrm{K}_n(X) = \mathrm{K}_n(Y)$ and for  \cref{thm:equivalence-curvature-sets} this holds if and only if $\tilde{\mathrm{K}}_n(X) = \tilde{\mathrm{K}}_n(Y)$.
\end{proof}
We recall that, thanks to  \cref{thm:wi-to-isometry}, two spaces are weakly isometric if and only if their canonicalizations are isometric, and this condition can now be checked using the above theorem. Hence, we have the following corollary. 
\begin{corollary}
Two finite metric spaces $(X, d_X)$, $(Y, d_Y)$ of cardinality $n$ with respective canonicalizations $(\mathcal{C}_X, d_{\mathcal{C}_X})$, $(\mathcal{C}_{Y}, d_{\mathcal{C}_Y})$ are weakly isometric if and only if $\tilde{\mathrm{K}}_n(\mathcal{C}_X) = \tilde{\mathrm{K}}_n(\mathcal{C}_Y)$.
\end{corollary}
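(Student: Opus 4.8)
The plan is to assemble the statement from two results already in hand, treating it as a transitive chain of equivalences rather than proving anything genuinely new. First I would invoke \cref{thm:wi-to-isometry}, which tells us that $(X,d_X) \wi (Y,d_Y)$ holds if and only if the canonicalizations $(\mathcal{C}_X, d_{\mathcal{C}_X})$ and $(\mathcal{C}_Y, d_{\mathcal{C}_Y})$ are isometric. This converts the weak-isometry question into an ordinary isometry question between two specific spaces, which is precisely the kind of question the reduced curvature sets are designed to answer.

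The second step is to apply the theorem immediately preceding this corollary, which characterizes isometry of two finite metric spaces of the same cardinality $n$ by the equality of their $n$-th reduced curvature sets. To use it I must first check that both canonicalizations genuinely have cardinality $n$: this is immediate from \cref{def:canonicalization}, since the canonicalization leaves the underlying set unchanged ($\mathcal{C}_X = X$ and $\mathcal{C}_Y = Y$) and only relabels the distances through the strictly increasing map $\psi$. Hence both $\mathcal{C}_X$ and $\mathcal{C}_Y$ have exactly $n$ points, and the hypotheses of that theorem are met. Applying it yields that $(\mathcal{C}_X, d_{\mathcal{C}_X})$ and $(\mathcal{C}_Y, d_{\mathcal{C}_Y})$ are isometric if and only if $\tilde{\mathrm{K}}_n(\mathcal{C}_X) = \tilde{\mathrm{K}}_n(\mathcal{C}_Y)$.

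Chaining the two equivalences then gives that $(X,d_X)\wi(Y,d_Y)$ holds if and only if $\tilde{\mathrm{K}}_n(\mathcal{C}_X) = \tilde{\mathrm{K}}_n(\mathcal{C}_Y)$, which is exactly the claim. I do not expect any real obstacle here, as the argument is simply a composition of established equivalences. The only point requiring a moment of care is the cardinality bookkeeping noted above, namely confirming that canonicalization preserves the number of points so that the isometry-versus-curvature theorem is applicable with the same parameter $n$ for both spaces; once that is observed, the two biconditionals fit together with nothing left to verify.
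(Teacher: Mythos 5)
Your proposal is correct and follows exactly the paper's reasoning: the paper derives this corollary (without a formal proof, just in the sentence preceding it) by chaining \cref{thm:wi-to-isometry} with the immediately preceding theorem characterizing isometry via $\tilde{\mathrm{K}}_n$, precisely as you do. Your extra remark that canonicalization preserves cardinality (since $\mathcal{C}_X = X$ as a set) is a small but worthwhile check that the paper leaves implicit.
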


\section{Vietoris-Rips filtration and finite metric spaces}
\label{sec:vietoris-rips}
We will provide a categorification of the concept of weak isometry of finite metric spaces, introduced in the work \cite{ganyushkin94}, in order to obtain another complete invariant for weak isometry.
\begin{definition}[monotone map between FMS]
\label{def:monotone-map}
Given two finite metric spaces $(X,d_X)$ and $(Y,d_Y)$, we say the a map $f: X \longrightarrow Y$ is monotone if, for all $x_1, x_2, x'_1, x'_2 \in X$, we have:
\begin{equation}
    d_X(x_1,x_2) \leq d_X(x'_1,x'_2) \implies d_Y(f(x_1),f(x_2)) \leq d_Y(f(x'_1),f(x'_2)).
\end{equation}
\end{definition}

\begin{remark}[Proposition 3 - \cite{ganyushkin94}]
\label{rem:monotone-map}
If $f:X \longrightarrow Y$ is a monotone map, then
\begin{equation}
    d_X(x_1,x_2) = d_X(x'_1,x'_2) \implies d_Y(f(x_1),f(x_2)) = d_Y(f(x'_1),f(x'_2)).
\end{equation}
The converse is not true.
\end{remark}

\begin{lemma}
\label{lem:monotone-map}
A monotone map  $f:X \longrightarrow Y$ between two finite metric spaces $X$ and $Y$ induces a non-decreasing function between the distance sets $\tilde{f}: \spec(X)\longrightarrow \spec(Y)$ given by
\begin{equation}
    \tilde{f}(a) = d_Y(f(x_i),f(x_j))\quad \textrm{ where } d_X(x_i,x_j) = a. 
\end{equation}
\end{lemma}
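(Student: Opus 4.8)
The plan is to verify three things in turn: that the formula defines $\tilde f$ unambiguously as a function on $\spec(X)$, that the resulting function is non-decreasing, and that its values actually lie in $\spec(Y)$. The first two are essentially immediate consequences of the definitions already in place, while the third is the only genuinely delicate point.

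For well-definedness, fix $a\in\spec(X)$. By \cref{def:spectrum} there is at least one pair of distinct points $x_i\neq x_j$ with $d_X(x_i,x_j)=a$, so the expression $d_Y(f(x_i),f(x_j))$ makes sense; I must only check it does not depend on which representing pair I pick. If $(x_i,x_j)$ and $(x_1',x_2')$ both realize the value $a$, then $d_X(x_i,x_j)=d_X(x_1',x_2')$, and \cref{rem:monotone-map} — which records that a monotone map carries equal distances to equal distances — yields $d_Y(f(x_i),f(x_j))=d_Y(f(x_1'),f(x_2'))$. Hence $\tilde f(a)$ is well-defined. For monotonicity, I take $a\le b$ in $\spec(X)$ and choose pairs with $d_X(x_i,x_j)=a$ and $d_X(x_1',x_2')=b$. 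Then $d_X(x_i,x_j)\le d_X(x_1',x_2')$, and applying \cref{def:monotone-map} directly gives $d_Y(f(x_i),f(x_j))\le d_Y(f(x_1'),f(x_2'))$, that is $\tilde f(a)\le\tilde f(b)$, so $\tilde f$ is non-decreasing.

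The step I expect to be the real obstacle is showing that the values of $\tilde f$ land in $\spec(Y)$ rather than merely in $\mathbb{R}^+$: this requires $f(x_i)\neq f(x_j)$ whenever $x_i\neq x_j$, so that $d_Y(f(x_i),f(x_j))$ is a genuine distance between distinct points and not $0$. This is not automatic from \cref{def:monotone-map} alone — the constant map is monotone (both sides of the defining implication are always $0\le 0$) yet collapses every pair, sending each $a\in\spec(X)$ to $0\notin\spec(Y)$. The clean remedy, consistent with the categorical framework being set up here (where the comparison case is a bijection), is to restrict to injective monotone maps: under injectivity, $x_i\neq x_j$ forces $f(x_i)\neq f(x_j)$, hence $d_Y(f(x_i),f(x_j))>0$ and $\tilde f(a)\in\spec(Y)$. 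Absent such a hypothesis the statement should be read with codomain $\spec(Y)\cup\{0\}$, which still supports the intended use of $\tilde f$ as an order-preserving map of distance sets.
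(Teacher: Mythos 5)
Your treatment of well-definedness and monotonicity coincides exactly with the paper's proof: well-definedness is the application of \cref{rem:monotone-map} to two pairs realizing the same distance, and monotonicity is a direct application of \cref{def:monotone-map}. The difference is your third point, which the paper's proof passes over in silence: the paper never checks that the values of $\tilde f$ actually lie in $\spec(Y)$, and you are right that in general they do not. Your counterexample is genuine --- the constant map satisfies \cref{def:monotone-map} vacuously (both sides of the implication read $0\leq 0$), and so does a map identifying two points of a three-point space; in either case some $a\in\spec(X)$ is sent to $0\notin\spec(Y)$, since distances between \emph{distinct} points of a metric space are strictly positive. So the lemma as stated needs one of your two repairs. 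Of the two, enlarging the codomain to $\spec(Y)\cup\{0\}$ is the one compatible with how the lemma is used later: the piecewise-linear extension in \cref{lem:extension-monotone-map} remains a well-defined non-decreasing function when some values of $\tilde f$ vanish, and in \cref{thm:vr-and-wi}, the only place where invertibility of the induced rescaling is exploited, the monotone map in question is a bijection, so injectivity holds there for free, exactly as you anticipate. In short, your proof is correct, matches the paper on the two points the paper proves, and is more careful than the paper on the one point it does not.
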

\begin{proof}
Thanks to  \cref{rem:monotone-map}, we have that the function $\tilde{f}$ is well defined. In fact, for any $a \in \spec(X)$, we have that, if $d_X(x_i,x_j) = d_X(x'_i,x'_j) = a$, we can write  $\tilde{f}(a) = d_Y(f(x_i),f(x_j)) = d_Y(f(x'_i),f(x'_j))$. The function is non-decreasing because if $a = d_X(x_1,x_2) \leq b = d_X(x_3,x_4)$, by  \cref{def:monotone-map}, $\tilde{f}(a) = d_Y(f(x_1),f(x_2)) \leq d_Y(f(x_3),f(x_4)) = \tilde{f}(b)$.
\end{proof}

\begin{lemma}
\label{lem:extension-monotone-map}
A monotone map  $f:X \longrightarrow Y$ between two finite metric spaces $X$ and $Y$ induces a non-decreasing function $\hat{f}:\mathbb{R}^+\longrightarrow\mathbb{R}^+$ whose restriction is $\tilde{f}$ as in  \cref{lem:monotone-map}.
\end{lemma}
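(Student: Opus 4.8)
The plan is to take the non-decreasing function $\tilde{f}: \spec(X) \longrightarrow \spec(Y)$ supplied by \cref{lem:monotone-map} and extend it to a non-decreasing function $\hat{f}: \mathbb{R}^+ \longrightarrow \mathbb{R}^+$ defined on all of $\mathbb{R}^+$. Since $\spec(X)$ is a finite subset of $\mathbb{R}^+$, write its elements in increasing order as $a_1 < a_2 < \dots < a_k$, and recall that $\tilde{f}(a_1) \leq \tilde{f}(a_2) \leq \dots \leq \tilde{f}(a_k)$ by \cref{lem:monotone-map}. The natural construction is piecewise-linear interpolation: set $\hat{f}(t) = \tilde{f}(a_i)$ to the already-known values at each $a_i$, interpolate linearly on each interval $[a_i, a_{i+1}]$, and extend by a constant (or by any non-decreasing rule) on $[0, a_1]$ and on $[a_k, +\infty)$.

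First I would define $\hat{f}$ explicitly on the three regimes. On $[0,a_1]$, set $\hat{f}$ to be constant equal to $\tilde{f}(a_1)$ (one could instead require $\hat{f}(0)=0$ if one wants compatibility with $\sif$, but the statement only asks for non-decreasing, so a constant suffices). On each interval $[a_i, a_{i+1}]$ for $i = 1, \dots, k-1$, set
\begin{equation*}
\hat{f}(t) = \tilde{f}(a_i) + \frac{\tilde{f}(a_{i+1}) - \tilde{f}(a_i)}{a_{i+1} - a_i}\,(t - a_i).
\end{equation*}
On $[a_k, +\infty)$, set $\hat{f}$ constant equal to $\tilde{f}(a_k)$. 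These pieces agree at the shared endpoints $a_i$, so $\hat{f}$ is well defined and continuous.

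Next I would verify the two required properties. That $\hat{f}$ restricts to $\tilde{f}$ on $\spec(X)$ is immediate from the construction, since $\hat{f}(a_i) = \tilde{f}(a_i)$ for each $i$. That $\hat{f}$ is non-decreasing follows because on each linear piece the slope $(\tilde{f}(a_{i+1}) - \tilde{f}(a_i))/(a_{i+1}-a_i)$ is non-negative, using $\tilde{f}(a_i) \leq \tilde{f}(a_{i+1})$ from \cref{lem:monotone-map}, and the two constant pieces are trivially non-decreasing; continuity at the junctions then glues these into a globally non-decreasing function on all of $\mathbb{R}^+$.

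There is no real obstacle here: the only point to check is that $\hat{f}$ maps into $\mathbb{R}^+$, which holds because all the interpolated values lie between $\tilde{f}(a_1) \geq 0$ and $\tilde{f}(a_k)$, and that the slopes are non-negative, both of which are guaranteed by the monotonicity of $\tilde{f}$. The mildest subtlety is the degenerate edge case $\spec(X) = \emptyset$ (a one-point space) or $k=1$, where one simply takes $\hat{f}$ constant; I would note this briefly to keep the construction complete.
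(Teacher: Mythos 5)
Your construction is correct and, at its core, is the same as the paper's: both extend $\tilde{f}$ from \cref{lem:monotone-map} by piecewise-linear interpolation between consecutive points $a_i < a_{i+1}$ of $\spec(X)$, and both verify monotonicity piece by piece. The only difference is the two boundary pieces: the paper takes $\hat{f}$ linear through the origin on $[0,a_1]$ (slope $\tilde{f}(a_1)/a_1$, so $\hat{f}(0)=0$) and of slope $1$ on $(a_k,\infty)$, whereas you take both end pieces constant. For the lemma as stated (a non-decreasing extension of $\tilde{f}$ to $\mathbb{R}^+$) your choice is perfectly adequate, and you even flag the $\hat{f}(0)=0$ issue yourself. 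What the paper's choice buys is downstream: in the proof of \cref{thm:vr-and-wi}, when $f$ is an isomorphism in $\fms$, the induced $\tilde{f}$ is strictly increasing, and with the paper's end pieces $\hat{f}$ is then strictly increasing on all of $\mathbb{R}^+$ (positive slope on every piece), hence invertible --- that proof explicitly uses $\hat{f}^{-1}$. With constant end pieces, $\hat{f}$ is never injective (it is constant on $[0,a_1]$ and on $[a_k,\infty)$), so it could not play that role without modification; the condition $\hat{f}(0)=0$ also keeps the extension compatible with the class $\sif$ used throughout the paper. None of this is a gap in your proof of this particular lemma; it is just why the paper's seemingly arbitrary end-piece formulas are the ones chosen.
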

\begin{proof}
Thanks to  \cref{lem:monotone-map}, we have that $f$ induces a non-decreasing function $\tilde{f}:\spec(X)\longrightarrow\spec(Y)$. Such a function can be extended to a non decreasing function $\hat{f}:\mathbb{R}^+ \longrightarrow\mathbb{R}^+$ in the following way. If $\spec(X) = \left\{a_1, \dots, a_k\ \middle|\ a_i <a_j \textrm{ if } i <j\right\}$, we define $\hat{f}$ as
\begin{equation}
    \hat{f}(x) = 
    \begin{dcases}
    \frac{\tilde{f}(a_1)}{a_1}x \quad &\textrm{ if } x \in [0, a_1] \\
    \frac{\tilde{f}(a_{i+1})- \tilde{f}(a_{i})}{a_{i+1}-a_i}(x-a_i) + \tilde{f}(a_i) \quad &\textrm{ if } x \in [a_i, a_{i+1}] \\
    (x-a_k)+\tilde{f}(a_k) \quad &\textrm{ if } x \in (a_k,\infty).
    \end{dcases}
\end{equation}
By definition, it follows that $\hat{f}\big|_{\spec(X)} = \tilde{f}$. 
\end{proof}


\begin{definition}[category of FMS - \cite{ganyushkin94}]
\label{def:category-fms}
We can define a category $\fms$ of finite metric space whose objects are finite metric spaces and whose morphisms are monotone maps.
\end{definition}

\begin{remark}
In is possible to observe that two finite metric spaces are isomorphic in the category $\fms$ if and only if they are weakly isometric. 
\end{remark}

We recall some concepts of algebraic topology that we will use in the rest of this section. For more detailed information, we refer the reader to \cite{munkres84}.\\
An \emph{abstract simplicial complex} is a collection $K$ of finite non-empty sets, called \emph{simplices}, such that for any $\sigma $ in $K$ every $\tau$, non-empty subset of $\sigma$ is in $K$. If a simplex $\sigma$ has elements $x_0,\dots, x_k$, we will say that is generated by the points $x_0,\dots, x_k$ and we will write $\sigma  = \{x_0,\dots, x_k\}$.
The \emph{dimension} of a simplex is defined as the number of its elements minus 1, so
\begin{equation*}
    \textrm{dim}\sigma = \textrm{dim}\{x_0,\dots, x_k\} = k. 
\end{equation*}
The 0-dimensional simplices are also called \emph{vertices}. Given two abstract simplicial complexes $K$ and $L$, a \emph{simplicial map} $s: K\longrightarrow L$ is a function that sends the vertices of $K$ to vertices of $L$, such that, if $\sigma = \{x_0,\dots, x_k\}$ is a $k$-simplex of $K$, then $s(\sigma) = \{s(\{x_0\}), \dots, s(\{x_k\})\}$ is a simplex of $L$. Beware that the function $s$ does not need to be injective on the set of vertices. It may be possible that the image of a simplex $\sigma$ is a simplex $s(\sigma)$ of lower dimension.
\begin{example}
Consider the simplicial complex 
\begin{equation*}
    K =\left\{\{a\}, \{b\}, \{c\}, \{a,b\}, \{a,c\}, \{b,c\}, \{a,b,c\}\right\}.
\end{equation*}
An example of simplicial map is the function $s:K\longrightarrow K$, that on the set of vertices is equal to
\begin{equation}
    \begin{split}
        s(\{a\}) = \{a\}, \\
        s(\{b\}) = \{b\}, \\
        s(\{c\}) = \{b\}. \\
    \end{split}
\end{equation}
It is possible to see that some simplices degenerate, through $s$, to simplices of lower dimension. For example $s(\{a,b,c\}) = \{a,b\}$.
\end{example}
\begin{definition}[category of simplicial complexes]
We denote with $\mathbf{Simp}$ the category whose objects are finite abstract simplicial complexes, and morphisms are simplicial maps. 
\end{definition}
An example of abstract simplicial complex that we will use is the Vietoris-Rips complex.
\begin{definition}[Vietoris-Rips complex]
Given a finite metric space $(X,d_X)$ and a positive real number $\varepsilon$, the Vietoris-Rips complex $\vr_\varepsilon(X)$ is the abstract simplicial complex whose elements are subsets $\sigma = \{x_0, \dots, x_k\}$ of $X$ such that
\begin{equation*}
    \forall x_i,x_j \in \sigma, \quad d_X(x_i, x_j) \leq \varepsilon.
\end{equation*}
\end{definition}
We recall that we can associate to any partially ordered set $(P,\leq_P)$ the category whose objects are the elements of $P$, and whose morphisms are the relations $a \leq_P b$. In this section, we will consider in this way the category $(\mathbb{R}^+, \leq)$.
\begin{definition}[Vietoris-Rips filtration]
Given a finite metric space $(X,d_X)$, we can consider the functor $\vr_\bullet(X):(\mathbb{R}^+,\leq) \longrightarrow \mathbf{Simp}$ that assigns to each $a\in \mathbb{R}^+$ the Vietoris-Rips complex $\vr_a(X)$
and to each morphism $a\leq b$ the inclusion $\iota^X_{a\leq b}: \vr_a(X) \hookrightarrow \vr_b(X)$.
\end{definition}
\begin{definition}[rescaling]
Given a non-decreasing function $\psi:\mathbb{R}^+\longrightarrow\mathbb{R}^+$ we call $\psi$-rescaling the functor $R_\psi:(\mathbb{R}^+,\leq)\longrightarrow(\mathbb{R}^+,\leq)$ with
\begin{equation}
    \begin{split}
        R_\psi(a) &= \psi(a) \\
        R_\psi(a \leq b) &= \psi(a)\leq\psi(b).
    \end{split}
\end{equation}
\end{definition}

\begin{remark}[notation]
Given two functors $F:\mathbf{B}\longrightarrow\mathbf{C}$ and $G:\mathbf{C}\longrightarrow\mathbf{D}$ we denote their composition as $GF:\mathbf{B}\longrightarrow\mathbf{D}$. If we have another functor $F':\mathbf{B}\longrightarrow\mathbf{C}$, a natural transformation $\eta$ between $F$ and $F'$ is a family of morphisms $\{\eta_a :F(a)\longrightarrow F'(a)\ |\ a \in \textrm{ob}\left(\mathbf{B}\right) \}$ such that, for every morphism $m:a\longrightarrow b$ in $\mathbf{B}$, the following diagram commute
\begin{equation}
\begin{tikzcd}
F(a) \arrow[r, "F(m)"] \arrow[d, "\eta_a"] & F(b) \arrow[d, "\eta_b"] \\
F'(a) \arrow[r, "F'(m)"] & F'(b).
\end{tikzcd}
\end{equation}
In this case, we will write $\eta:F\nat F'$.
\end{remark}

\begin{lemma}
\label{lem:nat-transformation-monotone-map}
A morphism $f: X \longrightarrow Y$ in $\fms$ induces a rescaling $R_{\hat{f}}$ and a natural transformation $\eta^f:\vr_\bullet(X) \nat \vr_\bullet(Y)R_{\hat{f}}$.
\end{lemma}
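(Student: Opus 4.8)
The plan is to let the morphism $f$ act on points and thereby induce a whole family of simplicial maps between the Vietoris--Rips complexes of $X$ and $Y$, one for each scale, and then to verify that this family is natural with respect to the rescaling $R_{\hat f}$. First I would invoke \cref{lem:extension-monotone-map} to obtain the non-decreasing extension $\hat f:\mathbb{R}^+\longrightarrow\mathbb{R}^+$ of the induced map $\tilde f$ on distance sets, and define $R_{\hat f}$ exactly as in the rescaling definition. Since $\hat f$ is non-decreasing, $R_{\hat f}$ is a well-defined functor $(\mathbb{R}^+,\leq)\longrightarrow(\mathbb{R}^+,\leq)$, so the composite $\vr_\bullet(Y)R_{\hat f}$ sends $a\mapsto R_{\hat f}(a)=\hat f(a)\mapsto\vr_{\hat f(a)}(Y)$ on objects, and $(a\leq b)\mapsto \iota^Y_{\hat f(a)\leq \hat f(b)}$ on morphisms.

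For each $a\in\mathbb{R}^+$ I would define the component $\eta^f_a:\vr_a(X)\longrightarrow\vr_{\hat f(a)}(Y)$ to be the simplicial map acting on vertices by $x\mapsto f(x)$. The one step that demands actual work is checking that this is a well-defined simplicial map, i.e.\ that it carries simplices to simplices. Given a simplex $\sigma=\{x_0,\dots,x_k\}\in\vr_a(X)$, by definition $d_X(x_i,x_j)\leq a$ for all $i,j$. When $x_i\neq x_j$ the value $d_X(x_i,x_j)$ lies in $\spec(X)$, so by \cref{lem:monotone-map} together with the fact that $\hat f$ restricts to $\tilde f$ on $\spec(X)$ we get
\[
d_Y(f(x_i),f(x_j))=\tilde f\bigl(d_X(x_i,x_j)\bigr)=\hat f\bigl(d_X(x_i,x_j)\bigr)\leq \hat f(a),
\]
the final inequality holding because $\hat f$ is non-decreasing and $d_X(x_i,x_j)\leq a$; the case $x_i=x_j$ is trivial since then $f(x_i)=f(x_j)$ and the distance is $0$. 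Hence $f(\sigma)=\{f(x_0),\dots,f(x_k)\}$ satisfies the Vietoris--Rips condition at scale $\hat f(a)$, so $f(\sigma)\in\vr_{\hat f(a)}(Y)$ and $\eta^f_a$ is a genuine simplicial map (possibly lowering dimension when $f$ fails to be injective, which is allowed).

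Finally I would verify naturality: for every $a\leq b$ the square
\[
\begin{tikzcd}
\vr_a(X) \arrow[r, "\iota^X_{a\leq b}"] \arrow[d, "\eta^f_a"'] & \vr_b(X) \arrow[d, "\eta^f_b"] \\
\vr_{\hat f(a)}(Y) \arrow[r, "\iota^Y_{\hat f(a)\leq \hat f(b)}"'] & \vr_{\hat f(b)}(Y)
\end{tikzcd}
\]
must commute, where the bottom inclusion exists precisely because $\hat f$ is non-decreasing, whence $\hat f(a)\leq\hat f(b)$. This is immediate: every arrow in the square is either an inclusion, which acts as the identity on simplices, or the vertex map induced by $f$, so both composites send a simplex $\sigma\in\vr_a(X)$ to $f(\sigma)$. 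Thus $\eta^f=\{\eta^f_a\}_{a\in\mathbb{R}^+}$ is the desired natural transformation $\vr_\bullet(X)\nat\vr_\bullet(Y)R_{\hat f}$.

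I do not anticipate a serious obstacle; the argument is essentially formal once the right objects are in place. The only point requiring genuine care is the well-definedness of $\eta^f_a$ in the second paragraph, where one must route the computation through $\tilde f$ and exploit the monotonicity of $\hat f$ to bound $d_Y(f(x_i),f(x_j))$ by $\hat f(a)$; after that, naturality follows purely from the fact that all structure maps are inclusions or the common vertex map $f$.
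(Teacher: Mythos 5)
Your proof is correct and follows essentially the same route as the paper: extend $f$ to $\hat f$ via \cref{lem:extension-monotone-map}, define each component $\eta^f_a$ as the vertex map induced by $f$, and check naturality by noting that both composites in the square send a simplex $\sigma$ to $f(\sigma)$. The only difference is that you spell out the well-definedness of $\eta^f_a$ (routing the estimate $d_Y(f(x_i),f(x_j))=\hat f(d_X(x_i,x_j))\leq\hat f(a)$ through $\tilde f$ and the monotonicity of $\hat f$), a step the paper dispatches with ``by the very definition of Vietoris--Rips complex and $\hat f$''; your version is the more complete one.
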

\begin{proof}
Thanks to  \cref{lem:extension-monotone-map}, we have a non-decreasing function $\hat{f}$ that induces a rescaling $R_{\hat{f}}$. We want to see that, for any $a\in \mathbb{R}^+$, we have a simplicial map $\eta_a^f: \vr_{a}(X)\longrightarrow \vr_\bullet(Y)R_{\hat{f}}(a) = \vr_{\hat{f}(a)}(Y)$ such that, for all $a,b \in \mathbb{R}^+$ with $a \leq b$, we have a commutative diagram
\begin{equation}
\begin{tikzcd}
\vr_{a}(X) \arrow[r, hook, "\iota^X"] \arrow[d, "\eta_a^f"] & \vr_{b}(X) \arrow[d, "\eta_b^f"] \\
\vr_{\hat{f}(a)}(Y) \arrow[r, hook, "\iota^Y"] & \vr_{\hat{f}(b)}(Y).
\end{tikzcd}
\end{equation}
We define $\eta_a^f$ as
\begin{equation}
    \eta_a^f(\{x_{i_0},\dots, x_{i_k}\}) = \{f(x_{i_0}),\dots, f(x_{i_k})\}.
\end{equation}
By the very definition of Vietoris-Rips complex and $\hat{f}$, we have that, for every simplex $\sigma$ of $\vr_a(X)$, $\eta_a^f(\sigma)$ is a simplex of $\vr_{\hat{f}(a)}(Y)$ and $\eta_a^f$ is a well-defined simplicial map.
We only need to prove that $\iota^Y\circ \eta_a^f = \eta_b^f\circ \iota^X$. Indeed, for any $\sigma \in \vr_a(X)$ with $\sigma = \{x_{i_0},\dots, x_{i_k}\}$ we have
\begin{equation}
    \begin{split}
        \iota^Y\circ \eta_a^f(\sigma) = \iota^Y(\{f(x_{i_0}), \dots,f(x_{i_k})\})=&\ \{f(x_{i_0}), \dots,f(x_{i_k})\} \\
        \eta_b^f\circ\iota^X(\sigma) = \eta_b^f(\{x_{i_0},\dots ,x_{i_k}\}) =&\ \{f(x_{i_0}), \dots,f(x_{i_k})\}.
    \end{split}
\end{equation}
Hence $\eta^f$ is a natural transformation.
\end{proof}
Now, we want to show that the Vietoris-Rips filtration can be used as a complete invariant for weak isometry.
\begin{theorem}
\label{thm:vr-and-wi}
Given two finite metric spaces $X$ and $Y$, the following statements are equivalent:
\begin{enumerate}
    \item $X$ and $Y$ are isomorphic in $\fms$.
    \item There exist a rescaling $R_\psi$ and a natural isomorphism $$\eta: \vr_\bullet(X)\nat \vr_\bullet(Y)R_\psi.$$
\end{enumerate}
\end{theorem}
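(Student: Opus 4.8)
The plan is to prove the two implications separately, using \cref{lem:nat-transformation-monotone-map} for the forward direction and extracting a Ganyushkin isomorphism (in the sense of \cref{def:isomorphism}) for the backward one. For $(1)\implies(2)$, an isomorphism in $\fms$ is a monotone bijection $f\colon X\to Y$ whose inverse $g\colon Y\to X$ is also monotone. \Cref{lem:nat-transformation-monotone-map} already hands us the rescaling $R_{\hat f}$ and the natural transformation $\eta^f\colon\vr_\bullet(X)\nat\vr_\bullet(Y)R_{\hat f}$, so it remains to upgrade $\eta^f$ to a natural \emph{iso}morphism, i.e.\ to check that each component $\eta^f_a$ is an isomorphism of simplicial complexes. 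Since $f$ is a bijection, $\eta^f_a$ is a bijection on vertices, and \cref{lem:monotone-map} gives $\hat f\big(d_X(x_i,x_j)\big)=d_Y\big(f(x_i),f(x_j)\big)$ on actual distances. The forward simplicial condition is \cref{lem:nat-transformation-monotone-map}; for the converse I first argue that $\hat f$ is \emph{strictly} increasing. Indeed $\tilde g\circ\tilde f=\operatorname{id}_{\spec(X)}$ because $g\circ f=\operatorname{id}_X$, so the non-decreasing map $\tilde f$ is injective, hence strictly increasing on $\spec(X)$, and its piecewise-linear extension $\hat f$ from \cref{lem:extension-monotone-map} is strictly increasing on all of $\mathbb{R}^+$. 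Then $\eta^f_a(\sigma)\in\vr_{\hat f(a)}(Y)$ forces $\hat f(d_X(x_i,x_j))=d_Y(f(x_i),f(x_j))\le\hat f(a)$, whence $d_X(x_i,x_j)\le a$ and $\sigma\in\vr_a(X)$. Thus $\eta^f_a$ is an isomorphism for every $a$, and $\psi:=\hat f$ witnesses statement (2).

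For $(2)\implies(1)$, given a rescaling $R_\psi$ and a natural isomorphism $\eta$, I first observe that the inclusions act as the identity on vertices, so naturality forces all the vertex bijections underlying the $\eta_a$ to coincide in a single bijection $f\colon X\to Y$. Because each $\eta_a$ is a simplicial isomorphism sending the edge $\{x_1,x_2\}$ to $\{f(x_1),f(x_2)\}$, an edge lies in $\vr_a(X)$ exactly when its image lies in $\vr_{\psi(a)}(Y)$, which yields the master relation
\begin{equation*}
d_X(x_1,x_2)\le a \iff d_Y(f(x_1),f(x_2))\le\psi(a)\qquad\text{for all }a\in\mathbb{R}^+ .
\end{equation*}
From this I can read off the strict part \eqref{eq:def:isomorphism-2} of \cref{def:isomorphism} cleanly: if $d_X(x_1,x_2)<d_X(x_3,x_4)$, then taking $a=d_X(x_1,x_2)$ gives $d_Y(f(x_1),f(x_2))\le\psi(a)$ while $d_X(x_3,x_4)\not\le a$ gives $d_Y(f(x_3),f(x_4))>\psi(a)$, so $d_Y(f(x_1),f(x_2))<d_Y(f(x_3),f(x_4))$. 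Running the same argument with the roles of the two pairs exchanged shows the implication also holds with all inequalities reversed.

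The main obstacle is the \emph{equality} clause \eqref{eq:def:isomorphism-1}: I must show that $d_X(x_1,x_2)=d_X(x_3,x_4)$ forces $d_Y(f(x_1),f(x_2))=d_Y(f(x_3),f(x_4))$. The master relation only says that the two image-distances lie on the same side of every value $\psi(a)$, i.e.\ both lie in $\big[\,\sup_{a<a_0}\psi(a),\ \psi(a_0)\,\big]$ with $a_0=d_X(x_1,x_2)$, and a priori a jump of $\psi$ across this interval could separate them while keeping every $\eta_a$ an isomorphism. The delicate point is therefore to rule out that $\psi$ skips over a value of $\spec(Y)$: this is automatic once $\psi$ is continuous (equivalently, a genuine order-isomorphism of $(\mathbb{R}^+,\le)$), since then $\sup_{a<a_0}\psi(a)=\psi(a_0)$ collapses the interval to a point. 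Crucially, this is exactly the regularity of the rescaling $\hat f$ produced by \cref{lem:extension-monotone-map} in the forward direction, so it is the hypothesis I would make explicit here; granting it, the equality clause follows and $f$ realizes the isomorphism of \cref{def:isomorphism}. By the equivalence between that notion and weak isometry, and between weak isometry and isomorphism in $\fms$ (\cref{def:category-fms}), we conclude that $X$ and $Y$ are isomorphic in $\fms$, closing the cycle.
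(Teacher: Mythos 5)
Your $(1)\implies(2)$ is essentially the paper's own proof: both invoke \cref{lem:nat-transformation-monotone-map} and then upgrade $\eta^f$ by checking each component is a simplicial isomorphism. You are in fact more careful than the paper on one point: the paper starts from ``a monotone map $f$ that is a bijection'' and simply asserts that $\hat f^{-1}$ exists and is strictly increasing, whereas you derive strict monotonicity of $\hat f$ from monotonicity of the inverse morphism $g$ via $\tilde g\circ \tilde f = \operatorname{id}_{\spec(X)}$. This care is genuinely needed: a monotone bijection need not be an isomorphism in $\fms$ (any bijection from a three-point space with distances $1,2,3$ onto an equilateral triangle is monotone), so strict monotonicity of $\hat f$ really does require using the inverse.

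Your $(2)\implies(1)$ follows the paper's skeleton (extract the common vertex bijection $f$ from naturality, compare first appearances of edges), but the ``delicate point'' you isolate is exactly where the paper's proof fails, and your suspicion is correct: with rescalings only required to be non-decreasing, as in the paper's definition, the implication is false. The paper claims the edge $\{f(x_1),f(x_2)\}$ lies in no $\vr_b(Y)$ with $b<\psi(\bar a)$ and concludes $d_Y(f(x_1),f(x_2))=\psi(\bar a)$; but naturality only constrains the complexes $\vr_c(Y)$ for $c$ in the image of $\psi$, so values of $\spec(Y)$ can hide inside a jump of $\psi$. Concretely: let $X$ be equilateral with side $1$; let $Y$ have $d_Y(y_1,y_2)=d_Y(y_1,y_3)=1$ and $d_Y(y_2,y_3)=3/2$; let $\psi(a)=a$ on $[0,1)$ and $\psi(a)=a+1/2$ on $[1,\infty)$. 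For every $a$, the complexes $\vr_a(X)$ and $\vr_{\psi(a)}(Y)$ are both three isolated vertices (if $a<1$) or both the full $2$-simplex (if $a\geq 1$), so $x_i\mapsto y_i$ induces a natural isomorphism $\vr_\bullet(X)\nat\vr_\bullet(Y)R_\psi$; yet $|\spec(X)|=1\neq 2=|\spec(Y)|$, so by \cref{lem:spectrum-of-isomorphic-fms} the spaces are not weakly isometric, hence not isomorphic in $\fms$. So the continuity hypothesis you propose to make explicit is not optional tidiness but a necessary correction to statement (2); granting it, your collapse of the interval $\bigl[\sup_{a<a_0}\psi(a),\,\psi(a_0)\bigr]$ to a point gives clause \eqref{eq:def:isomorphism-1}, your master relation gives clause \eqref{eq:def:isomorphism-2}, and the corrected equivalence still holds because the rescaling $\hat f$ produced by \cref{lem:extension-monotone-map} in the forward direction is piecewise linear, hence continuous. (One nitpick: continuity of a non-decreasing $\psi$ is not ``equivalent'' to $\psi$ being an order-isomorphism of $(\mathbb{R}^+,\leq)$, since $\psi$ need be neither injective nor surjective; but continuity is all your argument actually uses.)
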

\begin{proof}
Suppose that $X$ and $Y$ are isomorphic in $\fms$. Then we have a monotone map $f:X\longrightarrow Y$ that is a bijection. Thanks to  \cref{lem:nat-transformation-monotone-map}, we have a rescaling $R_{\hat{f}}$ and a natural transformation $\eta^f:\vr_\bullet(X)\nat \vr_\bullet(Y)R_{\hat{f}}$. We want to see that for all $a \in \mathbb{R}^+$, $\eta_a^f$ is an isomorphism of simplicial complexes. Since $f$ is a bijection, we know that $\eta_a^f$ is injective. In fact, given $\sigma = \{x_{i_1}, \dots, x_{i_k}\}$ and $\tau = \{x_{j_1}, \dots, x_{j_l}\}$ with $\sigma \neq \tau$, we can assume without loss of generality that there is a $x_{j}\in \tau$ with $x_j \not\in \sigma$. Then if $\eta_a^f(\sigma) = \eta_a^f(\tau)$, there is a $x_i\in \sigma$ with $f(x_j)=f(x_i)$ and this is absurd for the injecivity of $f$. On the other hand $\eta_a^f$ is also surjective. Suppose that there is a $\rho \in \vr_{\hat{f}(a)}(Y)$ with $\rho = \{y_{i_1},\dots, y_{i_k}\}$ that is not in the image of $\eta_a^f$. We can consider the points $f^{-1}(y_{i_1}),\dots, f^{-1}(y_{i_k})$ of $X$ and see that they form a simplex of $\vr_a(X)$. In fact, since $\rho \in \vr_{\hat{f}(a)}(Y)$, for all $u,v \in \rho$, we have $d_Y(u,v)\leq \hat{f}(a)$. 
It can be seen that, for all $x_1, x_2 \in X$, we have $\hat{f}(d_X(x_1,x_2)) = d_Y(f(x_1),f(x_2))$. Therefore, for all $u,v \in \rho$, since $\hat{f}^{-1}$ is also a strictly increasing function
\begin{equation}
    d_X(f^{-1}(u), f^{-1}(v)) = \hat{f}^{-1}(d_Y(u,v))\leq \hat{f}^{-1}(\hat{f}(a)) = a.
\end{equation}
Then points $f^{-1}(y_{i_1}), \dots, f^{-1}(y_{i_k})$ span a simplex of $\vr_a(X)$ whose image under $\eta_a^f$ is $\rho$. Therefore $\eta_a^f$ is also bijective and is an isomorphism of simplicial complexes. Hence, $\eta$ is a natural isomorphism.\\
Now, suppose that we have a rescaling $R_\psi$ and a natural isomorphism $\eta:\vr_\bullet(X)\implies \vr_\bullet(Y)R_\psi$. For each $a \in \mathbb{R}^+$, the restriction of the isomorphism $\eta_a$ to the vertices of $\vr_a(X)$ yields a bijection $f_a:X\longrightarrow Y$. Moreover, these bijections are all the same because of the commutativity of diagrams that define the natural isomorphism. Hence we have a unique bijection $f:X\longrightarrow Y$ associated with $\eta$. We claim that this bijection is an isomorphism of finite metric spaces. In fact, for each couple of points $x_1,x_2 \in X$, call $\Bar{a} = d_X(x_1,x_2)$. Since $\eta$ is a natural isomorphism, we have that $\sigma = \{x_1,x_2\}\in \vr_{\Bar{a}}(X)$ and that $\eta_{\Bar{a}}(\sigma) = \{f(x_1),f(x_2)\}$ is a simplex of $\vr_{\psi(\Bar{a})}(Y)$, that is not present in any $\vr_b(Y)$, for $b\leq \psi(\Bar{a})$. This means that $d_Y(f(x_1),f(x_2)) = \psi(\Bar{a})= \psi(d_X(x_1,x_2))$, for all $x_1, x_2 \in X$, and therefore $X$ and $Y$ are weakly isometric and also isomorphic in $\fms$.
\end{proof}
\begin{remark}
\label{rem:ph-counterexample}
Given two Vietoris-Rips filtrations $\vr_\bullet(X)$ and $\vr_\bullet(Y)$, the existence of an isomorphism between each $\vr_a(X)$ and $\vr_a(Y)$ is not enough to ensure that $X$ and $Y$ are isometric. Indeed, all these isomorphism have to commute with the inclusions given by the filtrations. For example, consider the two metric spaces, depicted in  \cref{fig:ph-counterexample}, given by the distance matrices 
\begin{equation}
\begin{split}
    d_X&=(d_X(x_i,x_j))=\left(\begin{array}{cccc}
        0 & 7 & 9 & 10 \\
        7 & 0 & 8 & 11 \\
        9 & 8 & 0 & 12 \\
        10 & 11 & 12 & 0
    \end{array}\right),\\\quad\\
    d_Y&=(d_Y(y_i,y_j))=\left(\begin{array}{cccc}
        0 & 7 & 9 & 10 \\
        7 & 0 & 8 & 12 \\
        9 & 8 & 0 & 11 \\
        10 & 12 & 11 & 0
    \end{array}\right).
\end{split}
\end{equation}
\begin{figure}[ht!]
    \centering
\tdplotsetmaincoords{80}{150}
\begin{tikzpicture}[scale=0.4,tdplot_main_coords]
\def\shiftx{-1.3}
\def\shifty{15}
\draw[dashed,->] (0-\shiftx,0-\shifty,0) -- (2-\shiftx,0-\shifty,0) node[anchor=north east]{};
\draw[dashed,->] (0-\shiftx,0-\shifty,0) -- (0-\shiftx,2-\shifty,0) node[anchor=north west]{};
\draw[dashed,->] (0-\shiftx,0-\shifty,0) -- (0-\shiftx,0-\shifty,2) node[anchor=south]{};
\coordinate (O) at (0,0,0);
\coordinate (A) at (6.364,0,0);
\coordinate (B) at (7.989,6.809,0);
\coordinate (C) at (0,6.364,0);
\coordinate (D) at (2.855,1.205,9.286);
\coordinate (Dxy) at (2.855,1.205,0);
\draw[thick] (B) -- (A) node[anchor=north east]{$x_1=y_1$};
\draw[thick] (C) -- (B) node[anchor=north west]{$x_2=y_2$};
\draw[dashed] (A) -- (C) node[anchor=west]{$x_3=y_3$};
\draw[thick, blue] (A) -- (D) node[anchor=south]{$x_4$};
\draw[thick, blue] (B) -- (D);
\draw[thick, blue] (C) -- (D);

\def\shiftx{0}
\def\shifty{0}
\coordinate (A) at (6.364+\shiftx,0+\shifty,0);
\coordinate (B) at (7.989+\shiftx,6.809+\shifty,0);
\coordinate (C) at (0+\shiftx,6.364+\shifty,0);
\coordinate (D) at (5.677+\shiftx,2.220+\shifty,9.726);
\coordinate (Dxy) at (5.677+\shiftx,2.220+\shifty,0);
\draw[thick] (A) -- (D) node[anchor=south]{$y_4$};
\draw[thick] (B) -- (D);
\draw[thick] (C) -- (D);
\end{tikzpicture}
    \caption{Embedding of the spaces of  \cref{rem:ph-counterexample} in $\mathbb{R}^3$.}
    \label{fig:ph-counterexample}
\end{figure}
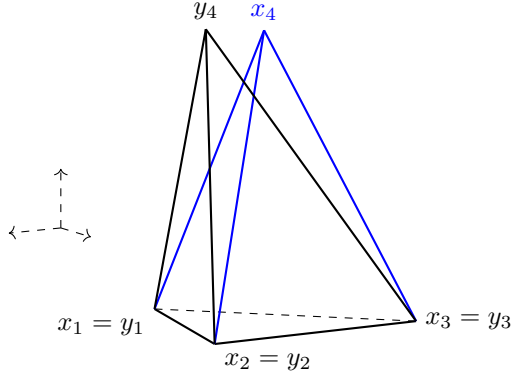

They are not isometric, but, for each $a \in \mathbb{R}^+$, we can find an isomorphism between $\vr_a(X)$ and $\vr_a(Y)$. Notice that, on the other hand, if there exists an $a$ in $\mathbb{R}^+$ such that there is no isomorphism between $\vr_a(X)$ and $\vr_a(Y)$, then the two spaces are for sure not isometric.
\end{remark}
\subsection{Persistent homology as an incomplete invariant for weak isometry}
Topological data analysis (TDA) is a branch of applied mathematics developed in the last 30 years in order to have a set of tools, based on topological and geometrical concepts, to analyse data \cite{frosini92, edelsbrunner08}. The main tool used in TDA is \emph{persistent homology}, an algebraic topology technique that consider the evolution and relations of homology groups of a sequence of nested topological spaces.
In the  usual pipeline of topological data analysis, the Vietoris-Rips filtration is used to compute the so called \emph{persistence module}, with which it is possible to keep track of the changes of homological features of the simplcial complexes at different scales of the filtration. 
Let us recall the definition of the $k$-th \emph{degree simplicial homology functor}. Given a field $\mathbb{F}$, with $\textbf{Vect}_{\mathbb{F}}$ we denote the category whose objects are finite dimensional vectors spaces over $\mathbb{F}$ and with morphisms the linear maps between these spaces.
Let us consider an abstract simplicial complex $K$. To simplify the definitions we will consider a total order on the set of vertices of $K$. Every $k$-simplex of $K$ from now on will be considered as an ordered tuple $[x_0,\dots,x_k]$ of $k+1$ vertices of $K$.
The group of $k$-chains of $K$, denoted with $C_k(K)$ is the vectors space whose elements are formal sums
\begin{equation}
\sum_i \lambda_i\sigma_i
\end{equation}
where $\lambda_i$ is an element of $\mathbb{F}$ and $\sigma_i$ is a $k$-simplex of $K$. It is possible to see that a basis is given by the formal sums $\{1_{\mathbb{F}}\sigma\ |\ \text{dim}\sigma=k\}$. 
Between every $C_k(K)$ and $C_{k-1}(K)$, there is a linear map $\partial_k:C_{k}(K)\to C_{k-1}(K)$ called \emph{boundary operator}. It is defined on the basis of $C_k$ given by the set of $k$-simplices as
\begin{equation}
    \partial_k([x_0,\dots,x_k])= \sum_{i=0}^k (-1)^i[x_0,\dots,\hat{x_i},\dots, x_k],
\end{equation}
where with $[x_0,\dots,\hat{x_i},\dots, x_k]$ we denote the $(k-1)$-simplex obtained removing the vertex $x_i$. It is possible to show that for every $k>0$ it holds $\partial_k\circ\partial_{k+1}=0$, meaning that the set $\operatorname{im}\partial_{k+1}$ is a vector subspace of $\operatorname{ker}\partial_k$. Then, the $k$-th homology group of $K$ with coefficients in $\mathbb{F}$ is the quotient vector space 
\begin{equation}
    H_k(K):= \frac{\operatorname{ker}\partial_k}{\operatorname{im}\partial_{k+1}}.
\end{equation}
The $k$-th simplicial homology functor is a functor from the category \textbf{Simp} to the category $\textbf{Vect}_{\mathbb{F}}$, that assigns to every object $K$ of \textbf{Simp} the homology group $H_k(K)$ in $\textbf{Vect}_{\mathbb{F}}$, and to every simplicial map $s:K\to L$ the induced map $H_k(s):H_K(K)\to H_k(L)$ between the homology groups.\\

In general therms, a \emph{filtration} is a functor $F: (\mathbb{R_+},\leq)\to \textbf{Simp}$. Given a filtration $F$, the composition $H_kF$ is called the $k$-th degree \emph{persistence module}. It is possible to define an \emph{interleaving distance} between persistence modules in the following way, as described in detail in \cite{bubenik14}.
Given a positive real number $\varepsilon$, it is possible to define a translation functor $T_\varepsilon:(\mathbb{R}^+, \leq)\longrightarrow(\mathbb{R}^+, \leq)$, with $T_\varepsilon(x)=x+\varepsilon$ and a natural transformation $\nu_\varepsilon: \operatorname{Id}_{(\mathbb{R}^+,\leq)}\nat T_\varepsilon$, with $\nu_\varepsilon(x):x \longrightarrow x+\varepsilon$ defined as $x\leq x+\varepsilon$.
Two persistence modules $H_kF$ and $H_kG$ are said to be $\varepsilon$-iterleaved if there exist two natural transformation $\eta_F:H_kF\nat H_kGT_\varepsilon$ and $\eta_G:H_kG\nat H_kFT_\varepsilon$ such that
\begin{equation}
    (\eta_GT_\varepsilon)\eta_F = H_kF\nu_{2\varepsilon}\quad \text{ and }\quad(\eta_FT_\varepsilon)\eta_G = H_kG\nu_{2\varepsilon}.
\end{equation}
The interleaving distance $d_I$ between two persistence modules $H_kF$ and $H_kG$ is defined as
\begin{equation}
    \label{eq:interleaving-distance}
    d_I(H_kF, H_kG)=\inf\left\{\varepsilon\geq 0\ |\ H_kF\text{ and }H_kG \text{ are $\varepsilon$-interleaved}\right\}.
\end{equation}
The reader familiar with  persistent homology will find  the following corollary to be natural.
\begin{corollary}
The persistent homology of the Vietoris-Rips filtration is an incomplete invariant for isometry.
\end{corollary}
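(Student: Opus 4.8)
The plan is to prove the two halves separately: first that the persistent homology of $\vr_\bullet$ is genuinely an isometry invariant, and then that this invariant is not complete. For the invariance I would observe that an isometry $X\to Y$ is in particular a distance-preserving bijection, hence the identity-rescaling case of \cref{lem:nat-transformation-monotone-map}: it identifies $\vr_a(X)$ with $\vr_a(Y)$ simplex-by-simplex for every $a$ and commutes with the inclusions $\iota^X$, $\iota^Y$, giving a natural isomorphism of filtrations. Post-composing with the $k$-th homology functor turns this into an isomorphism of persistence modules $H_k\vr_\bullet(X)\cong H_k\vr_\bullet(Y)$, so isometric spaces have identical barcodes in every degree. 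Only incompleteness then remains.

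For incompleteness I would use the pair $(X,d_X)$, $(Y,d_Y)$ of \cref{rem:ph-counterexample}, whose distance matrices differ only by interchanging the two largest distances $11$ and $12$. First I would record that they are not isometric, since the multiset of distances emanating from each point is an isometry invariant and $X$ has a point with profile $\{7,8,11\}$ while the profiles of $Y$ are $\{7,9,10\}$, $\{7,8,12\}$, $\{8,9,11\}$, $\{10,11,12\}$. In fact they are not even weakly isometric: by \cref{thm:vr-and-wi} an order-preserving bijection would send the smallest pair to the smallest pair and the largest to the largest, forcing $\{x_1,x_2\}\mapsto\{y_1,y_2\}$ and $\{x_3,x_4\}\mapsto\{y_2,y_4\}$, which is impossible because the two source pairs are disjoint whereas their images share $y_2$. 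This sharpens the conclusion to match the title of the subsection.

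The core of the argument is the comparison of the two barcodes. Both spaces share the distance set $\{7,8,9,10,11,12\}$ with all values distinct, and the edges of ranks $1$ through $4$ (lengths $7,8,9,10$) occur in the same pattern under $x_i\leftrightarrow y_i$, so the two filtrations are combinatorially identical for $a<11$. I would compute $H_0$ by tracking component merges: in both filtrations components merge exactly at $a=7$, $a=8$ and $a=10$, while the edges of length $9$, $11$, $12$ are internal to an already-connected component, so the $H_0$ persistence modules are isomorphic with barcode $\{[0,7),[0,8),[0,10),[0,\infty)\}$. For positive degrees I would verify that every complex in both filtrations is a disjoint union of contractible pieces: for $a\in[9,10)$ the triangle $\{1,2,3\}$ is filled as soon as its last edge appears; at $a=11$ the new edge completes a second filled $2$-simplex ($\{1,2,4\}$ for $X$, $\{1,3,4\}$ for $Y$) sharing an edge with the first, yielding a disk; and at $a=12$ the full $3$-simplex $\{1,2,3,4\}$ fills in. Hence $H_k\vr_a(X)=H_k\vr_a(Y)=0$ for all $k\ge 1$ and all $a$, so the positive-degree barcodes are empty for both. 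The persistence modules therefore agree in every degree while $X$ and $Y$ are not isometric, which establishes the corollary.

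I expect the main obstacle to be the positive-degree computation: one must confirm that each loop created when the fifth or sixth edge enters is killed instantly by the simultaneously-appearing $2$-simplex of the Vietoris–Rips complex, so that no $H_1$ or $H_2$ class ever survives. A secondary subtlety is that level-wise agreement of homology does not by itself give isomorphic persistence modules; here this upgrade is automatic because all persistent information lives in $H_0$ and is pinned down by the coinciding merge times $7,8,10$.
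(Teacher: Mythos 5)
Your proposal is correct and takes essentially the same route as the paper: the paper's entire proof consists of pointing to the same pair of spaces from \cref{rem:ph-counterexample} and asserting that they are not isometric yet have persistence modules at interleaving distance $0$. You merely make explicit what the paper delegates to that remark — the non-isometry check via distance profiles and the degree-by-degree computation showing both filtrations have barcode $\{[0,7),[0,8),[0,10),[0,\infty)\}$ in degree $0$ and trivial homology in all positive degrees.
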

\begin{proof}
The two spaces in  \cref{rem:ph-counterexample} have persistence modules with interleaving distance 0, yet they are not isometric.
\end{proof}
Even if persistent homology is only an incomplete invariant, we can still use it to study the problem of weak isometry. Given two finite metric spaces, it is possible to compute their canonicalizations, from them their associated Vietoris-Rips filtrations and then the persistence modules. If the obtained persistence modules have interleaving distance greater than 0, then the two spaces cannot be weakly isometric.

\subsection{A dissimilarity measure for persistence modules}
In a spirit similar to  \cref{sec:dissimilarity}, we can define a dissimilarity between persistence modules. Recall that we defined $\sif$ as the set of strictly increasing functions $\psi:\mathbb{R}^+\longrightarrow\mathbb{R}^+$, with $\psi(0) = 0$.
\begin{definition}
Given two persistence modules $HF_1$ and $HF_2$, the dissimilarity $\tilde{d}$ between them is
\begin{equation}
    \tilde{d}(HF_1,HF_2) = \inf_{\psi\in\sif}d_I(HF_1,HF_2R_\psi) + \inf_{\psi\in\sif}d_I(HF_1R_\psi,HF_2), 
\end{equation}
where $d_I$ is the interleaving distance between persistence modules.
\end{definition}
One of the key properties desired for distances between persistence modules is that they satisfy a form of stability theorem, that is, there must be a distance between the original metric spaces, that bounds from above the distance between the obtained persistence modules. We have a stability theorem of this kind for the Vietoris-Rips filtration and the interleaving distance \cite{chazal14}, in fact
\begin{equation*}
    d_{I}(H_k\vr_\bullet(X,d_X),H_k\vr_\bullet(Y,d_Y))\leq 2d_{GH}((X,d_X),(Y,d_Y)).
\end{equation*}
We will provide a similar stability theorem also for the distances introduced in this paper. 
\begin{theorem}[stability theorem for weak isometry]
\label{thm:weak-stability-theorem}
Let $(X,d_X)$, $(Y,d_Y)$ be two finite metric spaces. We denote with $H_k\vr_\bullet(X,d_X)$ and $H_k\vr_\bullet(Y,d_Y)$ the $k$-th persistence modules obtained from the Vietoris-Rips filtration associated with the two spaces. Then for all $k\in \mathbb{N}$,
\begin{equation}
    \label{eq:wi-stability}
    \tilde{d}(H_k\vr_\bullet(X,d_X),H_k\vr_\bullet(Y,d_Y))\leq 2\hat{d}((X,d_X),(Y,d_Y)).
\end{equation}
\end{theorem}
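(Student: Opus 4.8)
The plan is to reduce the inequality to the classical stability theorem $d_I\le 2\,d_{GH}$ for the Vietoris--Rips filtration by exploiting the fact that rescaling the \emph{index} of a filtration is the same as rescaling the \emph{metric}. Concretely, for a bijective $\psi\in\sif$ (a continuous strictly increasing surjection of $\mathbb{R}^+$ fixing $0$) the functor $R_\psi$ is invertible with inverse $R_{\psi^{-1}}$, and directly from the definition of the Vietoris--Rips complex a simplex $\{y_0,\dots,y_k\}$ lies in $\vr_{\psi(a)}(Y,d_Y)$ if and only if $\psi^{-1}(d_Y(y_i,y_j))\le a$ for all $i,j$. Hence
\[
\vr_\bullet(Y,d_Y)\,R_\psi=\vr_\bullet(Y,\psi^{-1}\circ d_Y),
\]
and similarly for $X$; applying the homology functor gives $H_k\vr_\bullet(Y)\,R_\psi=H_k\vr_\bullet(Y,\psi^{-1}\circ d_Y)$. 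This identity is the bridge between the filtration world, where $\tilde d$ lives, and the metric world, where $\hat d$ lives.

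With it in hand I would bound the two summands of $\tilde d$ separately. For the first summand, writing the infimum over the bijective elements of $\sif$ (justified below) and reindexing by $\phi=\psi^{-1}$, which ranges over the same set, gives
\[
\inf_{\psi}d_I\bigl(H_k\vr_\bullet(X),H_k\vr_\bullet(Y)R_\psi\bigr)=\inf_{\phi}d_I\bigl(H_k\vr_\bullet(X,d_X),H_k\vr_\bullet(Y,\phi\circ d_Y)\bigr).
\]
For each fixed $\phi$ the cited stability theorem yields $d_I\le 2\,d_{GH}((X,d_X),(Y,\phi\circ d_Y))$; passing to the infimum shows that this summand is at most $2\inf_{\phi}d_{GH}((X,d_X),(Y,\phi\circ d_Y))$, i.e. twice the second summand of $\hat d$. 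The identical argument with the roles of $X$ and $Y$ interchanged bounds the second summand of $\tilde d$ by twice the first summand of $\hat d$. Adding the two inequalities produces $\tilde d\le 2\hat d$, which is \eqref{eq:wi-stability}.

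The main obstacle is the invertibility technicality: a general $\psi\in\sif$ need not be surjective, so $R_{\psi^{-1}}$ is not a priori a functor on all of $(\mathbb{R}^+,\le)$ and the displayed identity can fail. I would remove it by showing that restricting all four infima (the two in $\hat d$ and the two in $\tilde d$) to bijective elements of $\sif$ does not change their values. Since every space here is finite, the quantity $d_{GH}((X,\psi\circ d_X),(Y,d_Y))$ depends on $\psi$ only through its values on the finite set $\spec(X)$, and the persistence module $H_k\vr_\bullet(Y)R_\psi$ depends on $\psi$ only through the finitely many critical parameters determined by $\spec(Y)$; consequently any $\psi\in\sif$ can be replaced, without altering these quantities, by a continuous strictly increasing surjection agreeing with it at those finitely many points, exactly the piecewise-linear type of extension already used in \cref{lem:extension-monotone-map}. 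Once the infima are taken over bijective rescalings, $\phi\mapsto\phi^{-1}$ is an involution of the index set, the reindexing above is a genuine equality of infima, and the only inequalities invoked are the instances of the classical stability theorem.
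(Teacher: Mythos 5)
Your proposal is correct and takes essentially the same route as the paper: both proofs reduce \eqref{eq:wi-stability} summand-by-summand to the classical stability theorem $d_I\le 2\,d_{GH}$ via the observation that reindexing a Vietoris--Rips filtration by a rescaling amounts to composing the metric with the (inverse) rescaling --- your identity $\vr_\bullet(Y,d_Y)R_\psi=\vr_\bullet(Y,\psi^{-1}\circ d_Y)$ is exactly the paper's opening identity $H_k\vr_\bullet(X,\psi\circ d_X)R_\psi = H_k\vr_\bullet(X,d_X)$ read in the other direction. The only differences are presentational: the paper argues with minimizing sequences for the two Gromov--Hausdorff infima and leaves the invertibility of $\psi$ implicit, whereas you isolate and repair that gap; note that the ``restriction to bijective rescalings changes nothing'' lemma is genuinely needed only for the two infima in $\hat{d}$ (where your piecewise-linear interpolation argument is airtight), while for the two infima in $\tilde{d}$ the trivial inequality $\inf_{\psi\in\sif}\le\inf_{\psi\ \mathrm{bijective}}$ already goes in the direction you use, so your more delicate claim that $H_k\vr_\bullet(Y,d_Y)R_\psi$ depends only on finitely many critical parameters can be dispensed with entirely.
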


We can see that thanks to this theorem and  \cref{prop:dtilde}, if the persistence modules obtained by two Vietoris-Rips filtration have distance $\tilde{d}$ greater than 0, then the corresponding finite metric spaces cannot be weakly isometric.

\begin{example}
\label{exmp:dtilde}
Consider the finite metric spaces $(X,d_X)$ and $(Y,d_Y)$ with
\begin{equation}
    \begin{split}
    d_X&=(d_X(x_i,x_j))=\left(\begin{array}{cccc}
        0 & 7 & 12 & 8 \\
        7 & 0 & 10 & 11 \\
        12 & 10 & 0 & 9 \\
        8 & 11 & 9 & 0
    \end{array}\right),\\\quad\\
    d_Y&=(d_Y(y_i,y_j))=\left(\begin{array}{cccc}
        0 & 7 & 12 & 8 \\
        7 & 0 & 10 & 9 \\
        12 & 10 & 0 & 11 \\
        8 & 9 & 11 & 0
    \end{array}\right).
\end{split}
\end{equation}

\begin{figure}[ht!]
    \centering
\tdplotsetmaincoords{80}{290}
\begin{tikzpicture}[scale=0.4,tdplot_main_coords]
\def\shiftx{-1.3}
\def\shifty{10}
\draw[dashed,->] (0-\shiftx,0-\shifty,0) -- (2-\shiftx,0-\shifty,0) node[anchor=north west]{};
\draw[dashed,->] (0-\shiftx,0-\shifty,0) -- (0-\shiftx,2-\shifty,0) node[anchor=north east]{};
\draw[dashed,->] (0-\shiftx,0-\shifty,0) -- (0-\shiftx,0-\shifty,2) node[anchor=south]{};
\coordinate (A) at (0,0,0);
\coordinate (B) at (7,0,0);
\coordinate (C) at (6.64,9.99,0);
\coordinate (D) at (-0.57,6.73,4.29);
\coordinate (D2) at (2.29,2.83,7.13);
\draw[thick] (B) -- (A) node[anchor=north east]{$x_1=y_1$};
\draw[dashed] (C) -- (B) node[anchor=north west]{$x_2=y_2$};
\draw[thick] (A) -- (C) node[anchor=east]{$x_3=y_3$};
\draw[thick, blue] (A) -- (D) node[anchor=south]{$x_4$};
\draw[thick, blue] (B) -- (D);
\draw[thick, blue] (C) -- (D);
\draw[thick] (A) -- (D2) node[anchor=south]{$y_4$};
\draw[thick] (B) -- (D2);
\draw[thick] (C) -- (D2);
\end{tikzpicture}
    \caption{Embedding of the spaces of  \cref{exmp:dtilde} in $\mathbb{R}^3$.}
\end{figure}
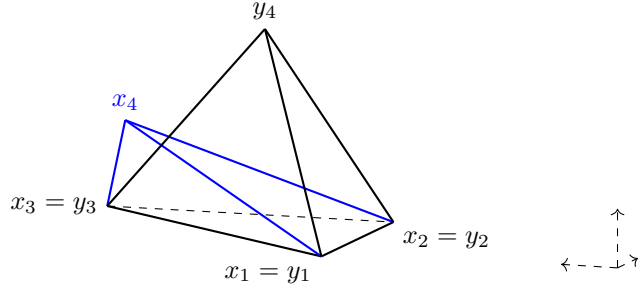

We will use $\mathbb{Z}_2$ as the field of coefficients with which we will compute homology.
We can see that the persistence module $H_1\vr_\bullet(X,d_X)$ is the functor
\begin{equation*}
    \begin{gathered}
    H_1\vr_\varepsilon(X,d_X)= 
    \begin{cases}
    \mathbb{Z}_2\quad &\text{ if } 10\leq\varepsilon\leq 11\\
    0\quad &\text{ otherwise},
    \end{cases}\\
    H_1\vr_\bullet(X,d_X)(a\leq b) = \\ =
    \begin{cases}
    \operatorname{id}:\mathbb{Z}_2\to \mathbb{Z}_2 &\text{ if } 10\leq a \leq b \leq 11\\
    0: H_1\vr_a(X,d_X)\to H_1\vr_b(X,d_X) &\text{ otherwise}.
    \end{cases}
    \end{gathered}
\end{equation*}
The persistence module $H_1\vr_\bullet(Y,d_Y)$ is
\begin{equation*}
    \begin{gathered}
    H_1\vr_\varepsilon(Y,d_Y)= 0\quad \forall \varepsilon \geq 0\\
    H_1\vr_\bullet(Y,d_Y)(a\leq b) = 0: H_1\vr_a(Y,d_Y)\to H_1\vr_b(Y,d_Y)\quad \forall a\leq b.
    \end{gathered}
\end{equation*}
Let us consider the sequence $(\psi_n)_{n\in \mathbb{N}}$ with
\begin{equation*}
    \psi_n(x) = \begin{cases}
    x \quad &\text{ if }0\leq x \leq 10 \\
    n(x-10)+10 &\text{ if } 10< x \leq 10+\frac{1}{n}\\
    x+1-\frac{1}{n} \quad &\text{ if }10+\frac{1}{n}< x.
    \end{cases}
\end{equation*}
It is easy to see that 
\begin{equation*}
    \lim_{n\to\infty}d_I(H_1\vr_\bullet(Y,d_Y),H_1\vr_\bullet(X,d_X)R_{\psi_n}) = 0.
\end{equation*}
On the other hand, for all  strictly increasing functions $\psi:\mathbb{R}^+\to\mathbb{R}^+$ it holds
\begin{equation*}
d_I(H_1\vr_\bullet(X,d_X),H_1\vr_\bullet(Y,d_Y)R_\psi) = \frac{1}{2}. 
\end{equation*}
Therefore, $\tilde{d}(H_1\vr_\bullet(X,d_X),H_1\vr_\bullet(Y,d_Y)) = \frac{1}{2}>0$, and the two spaces are not weakly isometric.
\end{example}

\begin{proof}[Proof of  \cref{thm:weak-stability-theorem}]
We have that, for every $\psi \in \sif$,
\begin{equation*}
    H_k\vr_\bullet(X,\psi\circ d_X))R_\psi= H_k\vr_\bullet(X,d_X).
\end{equation*}
This is true because $(X,\psi\circ d_X)$ and $(X,d_X)$ are weakly isometric and, as in the proof of  \cref{thm:vr-and-wi}, there are a natural isomorphism $\eta$ and a rescaling $R_\psi$ such that $\eta:\vr_\bullet(X, d_X)\nat \vr_\bullet(X,\psi\circ d_X)R_\psi$. In this case, the natural isomorphism is simply the identity between every $\vr_a(X,d_X)$ and $\vr_\psi(a)(X,\psi\circ d_X)$. Therefore, the two functors $\vr_\bullet(X,d_X)$ and $\vr_\bullet(X,\psi\circ d_X)R_\psi$ are equal.
\newline
Now, let us take a sequence $(\psi_n)_{n\in\mathbb{N}}$ in $\sif$ with 
$$\lim_{n\rightarrow\infty}d_{GH}((X,\psi_n\circ d_X), (Y,d_Y)) = \inf_{\psi\in\sif}d_{GH}((X,\psi\circ d_X),(Y, d_Y))$$
and a sequence $(\bar{\psi}_n)_{n\in\mathbb{N}}$ with $$\lim_{n\rightarrow\infty}d_{GH}((X,d_X),(Y,\bar{\psi}_n\circ d_Y)) = \inf_{\psi\in\sif}d_{GH}((X,d_X),(Y,\psi\circ d_Y)).$$
By the classical stability theorem \cite{chazal14}, for all $n$ in $\mathbb{N}$, we have
\begin{equation}
\label{eq:inequality-stab-thm}
    \begin{split}
        d_I(H_k\vr\bullet(X,\psi_n\circ d_X),H_k\vr\bullet(Y,d_Y))\leq 2d_{GH}((X,\psi_n\circ d_X),(Y, d_Y)),\\
        d_I(H_k\vr\bullet(X,d_X),H_k\vr\bullet(Y,\bar{\psi}_n\circ d_Y))\leq 2d_{GH}((X,d_X), (Y,\bar{\psi}_n\circ d_Y)).
    \end{split}
\end{equation}
Recall that, for all $n$ in $\mathbb{N}$ by the definition of infimum
\begin{equation}
\label{eq:infima-interleaving-d}
    \begin{gathered}
        \inf_{\psi\in \sif}d_I(H_k\vr\bullet(X,\psi\circ d_X),H_k\vr\bullet(Y,d_Y))\leq\\\leq d_I(H_k\vr\bullet(X,\psi_n\circ d_X),H_k\vr\bullet(Y,d_Y)),\\
        \text{and}\\
        \inf_{\psi\in\sif}d_I(H_k\vr\bullet(X,d_X),H_k\vr\bullet(Y,\psi\circ d_Y))\leq \\
        \leq d_I(H_k\vr\bullet(X,d_X),H_k\vr\bullet(Y,\bar{\psi}_n\circ d_Y)).
    \end{gathered}
\end{equation}
By the definition of $\tilde{d}$ and because of the previous inequalities it holds
\begin{equation}
\begin{gathered}
    \tilde{d}(H_k\vr\bullet(X,d_X),H_k\vr\bullet(Y,d_Y))\leq\\
    \lim_{n\rightarrow\infty}(d_I(H_k\vr\bullet(X,\psi_n\circ d_X),H_k\vr\bullet(Y,d_Y))+\\
    +d_I(H_k\vr\bullet(X,d_X),H_k\vr\bullet(Y,\bar{\psi}_n\circ d_Y)).
\end{gathered}
\end{equation}
Because of the inequalities in  \cref{eq:inequality-stab-thm} and the definition of $\hat{d}$, the following is true
\begin{equation}
\begin{gathered}
    \lim_{n\rightarrow\infty}(d_I(H_k\vr\bullet(X,\psi_n\circ d_X),H_k\vr\bullet(Y,d_Y))+\\
    +d_I(H_k\vr\bullet(X,d_X),H_k\vr\bullet(Y,\bar{\psi}_n\circ d_Y))\leq\\
    \leq\lim_{n\rightarrow\infty}(2d_{GH}((X,\psi_n\circ d_X), (Y,d_Y))+2d_{GH}((X,d_X), (Y,\bar{\psi}_n\circ d_Y)) =\\= 2\hat{d}((X,d_X),(Y,d_Y)).
\end{gathered}
\end{equation}
Therefore,
\begin{equation}
    \tilde{d}(H_k\vr\bullet(X,d_X),H_k\vr\bullet(Y,d_Y))\leq 2\hat{d}((X,d_X),(Y,d_Y)).
\end{equation}
\end{proof}

\section{Conclusions and future work}
\label{sec:conclusions}
We have constructed suitable representative elements for the equivalence classes of the relation of weak isometry for finite metric spaces. Thanks to these representatives we can check in a simple way whether two spaces are weakly isometric or not. We have given a definition of a notion of dissimilarity between finite metric spaces which measures how far they are from being weakly isometric. We have shown that curvature sets and Vietoris-Rips filtrations can help us in characterizing the classes of weak isometry. We have seen that we can use persistent homology to try to discriminate non-weakly isometric finite metric spaces, and we have defined a dissimilarity between persistence modules and shown that it satisfies a stability theorem for weak isometry. We would like to point out that in this work we never exploited the property of triangle inequality, therefore all the results could be easily extended to finite semi-metric spaces (see \cite{dovgoshey13}).   
In the future, we would like to try to find other and simpler invariants for weak isometry and make a comparison between them. We have also seen the usefulness of persistent homology, and in future work we would like to study the problem of finding distances between persistence modules that are meaningful from the point of view of weak isometry.

\section*{Acknowledgement}
AD, UF and FV have been supported by the SmartData@PoliTO center on Big Data and Data Science and by the Italian MIUR Award “Dipartimento di Eccellenza 2018-2022” - CUP: E11G18000350001.

\bibliographystyle{plainurl}
\bibliography{bibliography}

\end{document}